\documentclass[11pt,twoside]{amsart}
\usepackage[all]{xy}
        \usepackage {amssymb,latexsym,amsthm,amsmath}

        \topmargin=1.2cm
        \textheight = 8.3in
        \textwidth = 5.8in
        \setlength{\oddsidemargin}{.8cm}
        \setlength{\evensidemargin}{.8cm}

\long\def\symbolfootnote[#1]#2{\begingroup%
\def\thefootnote{\fnsymbol{footnote}}\footnote[#1]{#2}\endgroup}

\newcommand{\diag}{\textup{diag}}

\newcommand{\F}{\mathbb{F}}

\DeclareMathOperator{\lcm}{lcm}

\DeclareMathOperator{\Sym}{Sym}
\DeclareMathOperator{\bd}{bd}
\DeclareMathOperator{\PSL}{PSL}
\DeclareMathOperator{\GL}{GL}
\DeclareMathOperator{\SL}{SL}
\DeclareMathOperator{\PSU}{PSU}
\DeclareMathOperator{\SU}{SU}
\DeclareMathOperator{\U}{U}
\makeatletter
\def\imod#1{\allowbreak\mkern10mu({\operator@font mod}\,\,#1)}
\makeatother

\newtheorem{theorem}{Theorem}[section]
\newtheorem{lemma}[theorem]{Lemma}

\newtheorem{proposition}[theorem]{Proposition}
\newtheorem{conjecture}[theorem]{Conjecture}
\newtheorem{problem}[theorem]{Problem}
\theoremstyle{definition}

\newtheorem{example}[theorem]{Example}
\numberwithin{equation}{section}

\newcommand{\ignore}[1]{}

\newcommand{\mynote}[1]{}
\begin{document}
\setcounter{section}{0}
\title{ On Shalev's conjecture for type $A_n$ and ${}^{2}A_n$}
\author{Alexey Galt}
\address{Sobolev Institute of Mathematics, Novosibirsk, Russia}
\email{galt84@gmail.com}
\author{Amit Kulshrestha}
\address{IISER Mohali, Knowledge City, Sector 81, Mohali 140 306 India}
\email{amitk@iisermohali.ac.in}
\author{Anupam Singh}
\address{IISER Pune, Dr. Homi Bhabha Road, Pashan, Pune 411 008 India}
\email{anupamk18@gmail.com}
\author{Evgeny Vdovin}
\address{Sobolev Institute of Mathematics, Novosibirsk, Russia}
\email{vdovin@math.nsc.ru }
\thanks{This work is part of DST-RFBR joint project supported on India side by grant INT/RUS/RFBR/P-288 and on Russia side by grant RFBR according to the research project ¹17-51-45000}
\subjclass[2010]{20G15,37P35}
\today
\keywords{Word map, $SL_n(q)$}


\begin{abstract}
In the paper we consider images of finite simple projective special linear and unitary groups under power words. In
particular, we show that if $G\simeq \PSL_n^\varepsilon (q)$, then for every power words of type $x^M$ there exist
constant $c$ and $N$ such that $\vert \omega(G)\vert >c\frac{\vert G\vert }{n}$ whenether $\vert G\vert >N$.
\end{abstract}

\maketitle

\section{Introduction}
Recently there has been considerable interest in word maps on finite, algebraic and topological groups see, for
example, \cite{bz,gkp,glost,gt,hls,kt,ls,np} and a survey \cite{sh}. Recall that a group word
$\omega=\omega(x_1,\ldots,x_d)$ is an element of a free group $F_d$ with free generators $x_1,\ldots,x_d$. We may write
$\omega=x_{i_1}^{m_1}\ldots x_{i_k}^{m_k}$ where $i_j\in\{1,\ldots,d\}$, and $m_j$ are integers. We say that $\omega$
is a power word, if $\omega=\upsilon^m$ for $m>1$ and some word $\upsilon$.
For a group $G$ and $g_1,\ldots,g_d\in G$ we write
$$\omega(g_1,\ldots,g_d)=g_{i_1}^{m_1}\ldots g_{i_k}^{m_k} \in G.$$
The corresponding map $\omega: G^d\rightarrow G$ is called a word map and its image is denoted by $\omega(G)$.  Many
papers are devoted to estimate the size of $\omega(G)$. In~\cite{ls},  Larsen and Shalev proved the following

\begin{theorem}\label{LSth}
Let $G$ be a finite simple group of Lie type and of rank $n$ and $\omega\neq1$ be a word.
Then there exists $N=N(\omega)$ such that if $G$ is not of type $A_n$ or ${\ }^2A_n$, and $|G|\geqslant N$ then
$$|\omega(G)|\geqslant cn^{-1}|G|$$
for some absolute constant $c >0$.
\end{theorem}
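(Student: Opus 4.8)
The plan is to exploit the fact that $\omega(G)$ is a normal subset of $G$, i.e.\ a union of conjugacy classes, so that it suffices to produce a single conjugation-invariant set $S\subseteq\omega(G)$ with $|S|\geqslant cn^{-1}|G|$. The natural target is the set of regular semisimple elements whose centraliser is a maximal torus $T$ of a fixed ``most anisotropic'' (Coxeter) type. If $W_T=N_G(T)/T$ denotes the relative Weyl group, the regular semisimple elements lying in the $G$-conjugates of $T$ number about $|G|/|W_T|\cdot(1-O(q^{-1}))$, and for a Coxeter torus $W_T$ is cyclic of order $h$, the Coxeter number, with $h=O(n)$ in every type. Thus the whole $n^{-1}$ factor in the statement is accounted for by restricting attention to this one family of tori, and the theorem follows once we show that $\omega$ hits almost all regular semisimple elements of $T$.

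To show that such an element $g$ is hit, I would pass to the probabilistic/character-theoretic setup and estimate the word measure $P_\omega(g)=|G|^{-d}\,|\{(g_1,\dots,g_d):\omega(g_1,\dots,g_d)=g\}|$, the goal being to prove $P_\omega(g)>0$. Writing $P_\omega$ as a class function and expanding over $\mathrm{Irr}(G)$, the constant (trivial-character) component gives the uniform main term of order $|G|^{-1}$, and everything reduces to bounding the contribution of the nontrivial $\chi$. Here two inputs enter: first, Deligne--Lusztig theory gives very strong bounds on $|\chi(g)|$ when $g$ is regular semisimple in an anisotropic torus, so the values are small relative to $\chi(1)$; second, a Witten-type zeta estimate $\sum_{\chi\neq 1}\chi(1)^{-s}=o(1)$ (for a suitable exponent and $|G|\to\infty$) controls the number and size of the degrees. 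Combining these shows the nontrivial-character error is dominated by the main term for all large $G$, so every such $g$ indeed lies in the image.

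For a genuinely arbitrary word $\omega$ there is no closed formula for the number of solutions, so an intermediate step is needed: I would invoke Borel's dominance theorem — for $\omega\neq 1$ the word map $\omega\colon\mathbf G^d\to\mathbf G$ on the ambient simple algebraic group is dominant — to guarantee that generically a fibre over a regular semisimple target is nonempty, and then feed this back into the counting so that the estimates above apply class by class. The exclusion of types $A_n$ and ${}^2A_n$ is exactly where this scheme breaks: for $\PSL_n^\varepsilon(q)$ the nontrivial character degrees are comparatively small and numerous, the Witten-zeta sum fails to decay quickly enough, and the diagonal/central structure causes extra fusion of classes, so the nontrivial-character error can swamp the main term. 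This is precisely the gap that the present paper addresses for power words.

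The step I expect to be the main obstacle is making the character-sum estimate uniform in the rank $n$: one must show that the total nontrivial-character contribution is $o(1)$ relative to the main term with completely explicit dependence on $n$, which requires both sharp Deligne--Lusztig bounds at regular elements of the anisotropic torus and sharp control of the Witten zeta function $\zeta_G(s)=\sum_\chi\chi(1)^{-s}$. Marrying these uniform estimates to the reduction for a general, non-power word, so that the bound $cn^{-1}|G|$ emerges with an absolute constant $c$ independent of both $\omega$ and $G$, is the technical heart of the argument.
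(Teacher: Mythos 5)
First, a point of orientation: Theorem~\ref{LSth} is not proved in this paper at all; it is Larsen and Shalev's theorem, quoted from \cite{ls} as background. So your attempt can only be measured against the argument of \cite{ls}, whose counting skeleton the present paper reuses in its own results. Your first paragraph is correct and does match that skeleton: $\omega(G)$ is a normal subset, a regular semisimple element lies in a unique maximal torus, so it suffices to capture almost all regular elements in the $G$-conjugates of a single torus whose relative Weyl group has order $O(n)$; the factor $n^{-1}$ is exactly $|W(T)|^{-1}$. This is precisely how Lemma~\ref{conjugates_i+n-i} and Theorem~\ref{1torus} operate here.

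The genuine gap is in the mechanism by which you propose to show that the torus elements are actually hit. For a general word $\omega$ there is \emph{no} character formula for the fibre sizes of the word map: the word measure $P_\omega$ can formally be expanded as $\sum_\chi a_\chi\chi$, but the coefficients $a_\chi$ admit no closed form and no useful a priori bounds. The machinery you invoke --- trivial-character main term, error controlled by Deligne--Lusztig bounds at regular semisimple elements together with Witten zeta estimates $\sum_{\chi\neq 1}\chi(1)^{-s}=o(1)$ --- applies to \emph{products of normal subsets}, where the Frobenius convolution formula is available; that is why this machinery proves Waring-type statements such as $\omega(G)^2=G$ or $\omega(G)^3=G$, but it cannot establish $g\in\omega(G)$ for a single word value. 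Your proposed repair via Borel dominance does not splice into the character estimate either: dominance plus Lang--Weil controls rational points of fibres only with constants depending on the degrees of the varieties involved, and these degrees grow with $n$, so this route genuinely yields only the bounded-rank case (where it gives the stronger bound $|\omega(G)|\geqslant\bigl(1-O_\omega(q^{-1})\bigr)|G|$). What \cite{ls} actually does for unbounded rank is, in essence, a reduction to bounded rank over extension fields: one finds inside $G$ a subgroup that is (a product of) bounded-rank groups of Lie type over $\F_{q^a}$ --- for instance $\SL_2(q^n)\cong \mathrm{Sp}_2(q^n)\leq \mathrm{Sp}_{2n}(q)$ --- containing a maximal torus $T$ of $G$ with $|N_G(T)/T|=O(n)$; since words act coordinatewise on direct products and the bounded-rank case covers almost all of each factor, almost all of $T$ consists of $\omega$-values, and the counting skeleton from your first paragraph finishes the proof.

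Separately, your diagnosis of why types $A_n$ and ${}^2A_n$ must be excluded (slow decay of the Witten zeta function, fusion of classes) is off the mark: for large rank the zeta function of $\PSL_n^\varepsilon(q)$ decays perfectly well. The true obstruction is the structure of the maximal tori: in $\PSL_n^\varepsilon(q)$ every maximal torus has large cyclic sections tied to $q-\varepsilon 1$ and to the centre $(n,q-\varepsilon 1)$, so a power word such as $x^{p^n-(\varepsilon 1)^n}$ compresses \emph{every} torus by a factor of roughly $(p-1)/n$. This is exactly what the Example following Theorem~\ref{main} in this paper exploits to show that in type $A$ no constant $c$ independent of $\omega$ can work; the exclusion in Theorem~\ref{LSth} is therefore necessary, not an artifact of the character-theoretic method.
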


Notice that $c$ depends on $w$ in Theorem \ref{LSth}. Later in~\cite{np},
Nikolov and Pyber found a weaker lower bound for the groups of type $A_n$ and
${\ }^2A_n$. In the expository article~\cite{sh}, Shalev conjectured that the
Theorem~\ref{LSth} holds for all groups of Lie type.

\begin{conjecture}{\cite[Conjecture 5.6]{sh}}. For every word $\omega\neq1$ there exists a number $N = N(\omega)$
such that if $G$ is an alternating group of degree $n$ or a finite simple group
of Lie type of rank $n$, and $|G|\geqslant N$, then
$$|\omega(G)|\geqslant cn^{-1}|G|,$$
where $c > 0$ is an absolute constant.
\end{conjecture}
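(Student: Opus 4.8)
The plan is to prove the conjecture in the case this paper addresses: the power words $\omega=x^M$ on the groups $G\simeq\PSL_n^\varepsilon(q)$ of type $A_n$ and ${}^2A_n$ left open by Theorem~\ref{LSth}, so that together with Theorem~\ref{LSth} all finite simple groups of Lie type are covered for such words (the alternating groups and general words lie outside the method below). Since $\omega(G)=\{g^M:g\in G\}$ is just the set of $M$-th powers, it suffices to exhibit one well-controlled family of $M$-th powers of size of order $|G|/n$. First I would reduce from $\PSL$ to $\SL$: writing $\pi\colon\SL_n^\varepsilon(q)\to\PSL_n^\varepsilon(q)$ for the quotient by the centre $Z$, one has $\omega(\PSL_n^\varepsilon(q))=\pi(\omega(\SL_n^\varepsilon(q)))$ and $\pi$ is $|Z|$-to-one, so $|\omega(\PSL_n^\varepsilon(q))|\geq|\omega(\SL_n^\varepsilon(q))|/|Z|$; as $|\PSL_n^\varepsilon(q)|=|\SL_n^\varepsilon(q)|/|Z|$, a bound $|\omega(\SL_n^\varepsilon(q))|\geq c|\SL_n^\varepsilon(q)|/n$ transfers verbatim to $\PSL_n^\varepsilon(q)$ with the same $c$. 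From now on I set $G=\SL_n^\varepsilon(q)$.

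The core is a count of regular semisimple $M$-th powers inside the conjugates of a single Coxeter (Singer-type) torus $T$, cyclic of order $d=(q^n-1)/(q-1)$ in the linear case and $d=(q^n-(-1)^n)/(q+1)$ in the unitary case, whose relative Weyl group is cyclic of order $n$, so that $|N_G(T)|=n|T|$ and $T$ has $|G|/(n|T|)$ conjugates. Two facts drive the estimate. First, on the cyclic group $T$ the map $t\mapsto t^M$ has image of index $\gcd(|T|,M)\leq M$ (and when $p\mid M$ one removes the $p$-part of $M$, which acts invertibly on the $p'$-group $T$), so at least $|T|/M$ elements of $T$ are $M$-th powers within $T$. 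Second, a regular semisimple element has its ambient maximal torus as centraliser and lies in no other; hence it is an $M$-th power in $G$ exactly when it is one in $T$, and regular semisimple elements in distinct conjugates of $T$ are automatically distinct, so no overcounting occurs when summing over the conjugates.

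Putting these together, the number of regular semisimple $M$-th powers conjugate into $T$ is at least
\[
\frac{|G|}{n|T|}\Bigl(\frac{|T|}{M}-r\Bigr),
\]
where $r$ is the number of non-regular elements of $T$, namely those whose generator lies in a proper subfield. A standard subfield count gives $r=o(|T|/M)$ once $|G|$ exceeds a threshold $N=N(M)$, so the parenthesis exceeds $|T|/(2M)$ and one obtains $|\omega(G)|\geq|G|/(2Mn)$. This is the inequality of the conjecture with $c=1/(2M)$; as already in Theorem~\ref{LSth}, this constant depends on $\omega$ (through $M$) rather than being absolute.

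The step I expect to be the main obstacle is making this count uniform over the full parameter range, in particular reconciling the two depletions — the non-$M$-th-powers and the non-regular elements — when $q$ is small and $n$ is large, so that the single threshold $N=N(M)$ genuinely suffices; and carrying out the bookkeeping for ${}^2A_n$, where the torus order $d$, the centre $Z$, and the description of the subfield (non-regular) elements all acquire the signs of the unitary form. A secondary but genuine difficulty is keeping track of the passage among $\GL^\varepsilon$, $\SL^\varepsilon$ and the projective quotient, so that $|T|$, the index $n$ of $T$ in its normaliser, and $|Z|$ are each computed in the correct group.
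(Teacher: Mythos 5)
Your proposal is correct and follows essentially the same route as the paper's own proof: a reduction from $\PSL_n^\varepsilon(q)$ to $\SL_n^\varepsilon(q)$ (the paper's first lemma of Section 4), then a count of regular semisimple $M$-th powers in the Singer/Coxeter torus corresponding to the partition $n$ together with its $|G|/(n|T|)$ conjugates, exactly as in Lemmas~\ref{isotropic}, \ref{NomMaxTorus}, \ref{conjugates_i+n-i} and Theorem~\ref{1torus}, yielding the same bound $|\omega(G)|\geqslant |G|/(2nM)$ with $c=1/(2M)$ depending on $\omega$ (as it must, by the paper's Example). The only cosmetic difference is that you bound the non-regular part of $\omega(T)$ by the number of non-regular elements of $T$ (a subfield count), whereas the paper bounds the number of $u\in T$ with $u^M$ non-regular via the gcd identities of Lemma~\ref{Zav}; both error terms are of order $n^2Mq^{n/2}$ and are absorbed by the same large-$n$ versus large-$q$ case split that you correctly flag as the remaining bookkeeping.
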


Further Shalev states as a problem that in case of non-power words a stronger
conclusion might hold.
\begin{problem}{\cite[Problem 5.7 (i)]{sh}}. Suppose $\omega$ is not a power word, and let $G$ be a finite simple group.
Is it true that there exist $N,c > 0$ depending on $\omega$ such that if $|G|\geqslant N$ then $|\omega(G)|\geqslant
c|G|$?
\end{problem}
This problem shows that power words are of considerable interest and in this case one could expect the lowest bound on
$|\omega(G)|$.  In this article, we explore Shalev conjecture for finite groups of type $A_n$ and ${\ }^2A_n$ and for
power words $\omega=x^M$. It is clear, that if $\omega=x_{i_1}^{m_1}\ldots x_{i_k}^{m_k}$ and $m_1+\ldots+m_k\not=0$,
then $\vert \omega(G)\vert\geqslant \vert \bar{\omega}(G)\vert$, where $\bar{\omega}=x^{m_1+\ldots +m_k}$. In the paper
we restrict ourself by the consideration of power words of type $\omega=x^M$.

Throughout the paper we assume that
$$\omega=x^M$$ and fix symbols $\omega$ and $M$ for this situation.

First we show by an example that the constant $c$ in the conjecture depends on the word $\omega$. Next, we prove that
for power words $\omega=x^M$ the conjecture is true (see Theorem \ref{1torus}). Moreover  we improve the bound as follows.

\begin{theorem}\label{th1}
Let $\omega=x^M$ be a power word and $G=\PSL_n^\varepsilon (q)$. Then  there
exist positive constant $N$ depending only on $M$ such that if $|G|\geqslant
N$, then
$$|\omega(G)| \geqslant \frac{\ln(n)}{2n\cdot M^2} |G|.$$
\end{theorem}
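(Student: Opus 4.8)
The plan is to work in $\SL_n^\varepsilon(q)$ and pass to $\PSL_n^\varepsilon(q)$ at the very end. Since the projection $\pi\colon \SL_n^\varepsilon(q)\to \PSL_n^\varepsilon(q)$ is a surjective homomorphism with kernel the centre $Z$, we have $\omega(\PSL_n^\varepsilon(q))=\pi(\omega(\SL_n^\varepsilon(q)))$ and hence $|\omega(\PSL)|/|\PSL|\ge |\omega(\SL)|/|\SL|$; so it suffices to prove the density bound in $G=\SL_n^\varepsilon(q)$. As $\omega=x^M$, the image $\omega(G)$ is exactly the set of $M$-th powers, and I would bound it below by counting only those $M$-th powers that are regular semisimple. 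Each regular semisimple element lies in a unique maximal torus, so these elements are partitioned by the $G$-conjugacy type of their centralizing torus; the types are indexed (in the usual twisted sense when $\varepsilon=-1$) by conjugacy classes $[w]$ of $W=S_n$. Writing $T_w$ for a torus of type $w$ and $X_w$ for the regular semisimple $M$-th powers whose centralizer has type $w$, uniqueness of the torus gives disjointness $X_w\cap X_{w'}=\varnothing$ for $w\ne w'$ together with
\[
|X_w|=\frac{|G|}{|N_G(T_w)|}\,r_w,\qquad r_w=|\{t\in T_w^M: t\text{ regular semisimple}\}|,
\]
where $T_w^M=\{t^M:t\in T_w\}$ is the subgroup of $M$-th powers. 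Since $|N_G(T_w):T_w|=|C_W(w)|$, this reads $|X_w|=\dfrac{|G|}{|C_W(w)|}\cdot\dfrac{r_w}{|T_w|}$.

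Next I would specialise to the two-block tori, those of type $w=(d,n-d)$ with $1\le d<n-d$, for which $|C_W(w)|=d(n-d)$ and $T_w$ is governed by the two factors of orders $q^{d}-\varepsilon^{d}$ and $q^{n-d}-\varepsilon^{n-d}$. Here $|T_w[M]|\le \gcd(M,q^{d}-\varepsilon^{d})\gcd(M,q^{n-d}-\varepsilon^{n-d})\le M^2$, so $|T_w^M|=|T_w|/|T_w[M]|\ge |T_w|/M^2$. Granting the estimate $r_w\ge \tfrac12|T_w^M|$ discussed below, each type contributes $|X_w|\ge \dfrac{|G|}{2M^2\,d(n-d)}$, and summing over the disjoint two-block types gives
\[
|\omega(G)|\ \ge\ \frac{|G|}{2M^2}\sum_{1\le d<n-d}\frac{1}{d(n-d)}.
\]
Since $\sum_{d=1}^{n-1}\tfrac{1}{d(n-d)}=\tfrac{2}{n}H_{n-1}$, the sum over distinct types equals $\tfrac12\cdot\tfrac2n H_{n-1}$ up to a negligible middle term, and $H_{n-1}\ge \ln n$ then yields $|\omega(G)|\ge \tfrac{\ln n}{2nM^2}|G|$. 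Thus the harmonic sum over the small block $d$ is precisely what produces the extra factor $\ln n$ over the single-torus bound.

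The hard part is the estimate $r_w\ge\tfrac12|T_w^M|$, namely that a definite proportion of the $M$-th powers in a two-block torus remain regular semisimple. Regularity can be destroyed by the power map: for a wall subgroup $S\le T_w$ (where an eigenvalue drops into a proper subfield, or eigenvalues collide between the two blocks) one computes
\[
\frac{|T_w^M\cap S|}{|T_w^M|}=\frac{|S|}{|T_w|}\,\bigl|(T_w/S)[M]\bigr|,
\]
and the torsion factor $\bigl|(T_w/S)[M]\bigr|$ can be as large as $\gcd(M,q^{d}-\varepsilon^{d})$; when $d$ is small and this gcd is large the whole $M$-th power image of the $d$-block collapses into a subfield, forcing $r_w=0$ (for example $q=2$, $d=2$, $3\mid M$). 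To avoid this I would sum only over $d$ with $d_2\le d\le n-d_2$, where $d_2=d_2(M)$ is a constant chosen so that in each block the proportion of field generators among the $M$-th powers is at least $1-\epsilon$. A M\"obius/subfield count shows the non-generating $M$-th powers in a $d$-block number at most $O\bigl(M(\log d)\,q^{d/2}\bigr)$ out of $\ge (q^{d}-1)/M$, so a threshold $d_2=O(\log M)$ works uniformly in $q\ge2$; for such $d$ one gets $r_w\ge(1-\epsilon)|T_w^M|$ with $\epsilon$ bounded well below $\tfrac12$. Discarding the terms $d<d_2$ removes only $O(\log d_2)$ from $H_{n-1}$, and the factor of $2$ of slack between the available $\tfrac{\ln n}{n}$ and the required $\tfrac{\ln n}{2n}$ absorbs this deficit once $n$ is large. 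Controlling $\bigl|(T_w/S)[M]\bigr|$ uniformly over all walls $S$, and making the generator count precise, is the main technical work.

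Finally, the bound must also hold when $n$ stays bounded while $|G|\to\infty$, which forces $q\to\infty$. In that regime every maximal torus is regular semisimple up to a proportion $O(1/q)$, and a single suitable torus already gives $|\omega(G)|\ge c(M)|G|/n$ by the power-word case of Shalev's conjecture (Theorem \ref{1torus}); since $\ln n$ is then bounded, this dominates the required $\tfrac{\ln n}{2nM^2}|G|$ after enlarging $N$. Combining the two regimes, all thresholds depend only on $M$, so $N=N(M)$, as asserted.
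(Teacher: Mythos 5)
Your proposal follows the same skeleton as the paper's proof: reduce to $\SL_n^\varepsilon(q)$ via the surjection lemma, count regular semisimple $M$-th powers inside the two-block tori of type $(d,n-d)$, use the rank-two torsion bound $|T_w[M]|\leqslant M^2$, and extract $\ln n$ from $\sum_d \frac{1}{d(n-d)}=\frac{2}{n}H_{n-1}$ together with the normalizer index $|W(T_w)|=d(n-d)$. The one point where you genuinely depart from the paper is your restriction to blocks $d\geqslant d_2(M)$, and this is not optional polish: it repairs an actual error in the published argument. The paper's Lemma~\ref{torus_i+n-i} asserts $|\omega(T)_{reg}|\geqslant \frac{|T|}{2M^2}$ uniformly over all partitions $i+(n-i)$ once $|G|$ is large, and its proof claims that cases~\eqref{eq1} and~\eqref{eq2} contribute at most $2n^2Mq^{n/2}$ non-regular powers ``as in Lemma~\ref{isotropic}''; but unlike the one-parameter Coxeter torus, here each bad $\lambda$ in the $i$-block combines with \emph{every} $\mu$ in the $(n-i)$-block, so the correct count per pair $(k,l)$ is of order $\gcd\bigl(q^i-(\varepsilon 1)^i,\,M((\varepsilon q)^r-1)\bigr)\cdot q^{n-i}$, which can be the entire torus when $\gcd(M,q^i-(\varepsilon 1)^i)$ is large. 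Your example $q=2$, $d=2$, $3\mid M$ is exactly such a case: in $\SL_n(2)$ the torus of type $2+(n-2)$ has $\lambda^3=1$, so $u^M$ has the repeated eigenvalue $1$ for every $u\in T$, hence $\omega(T)_{reg}=\varnothing$ for every $n$, contradicting the lemma. Consequently the paper's proof of Theorem~\ref{th1} is incomplete precisely at the small-$d$ tori, and your fix --- discard $d<d_2(M)=O(\log M)$, note that this removes only $\ln d_2+O(1)$ from $H_{n-1}$, and keep the regular proportion $1-\epsilon$ with $\epsilon$ well below $\tfrac12$ so that $(1-\epsilon)\bigl(H_{n-1}-\ln d_2-O(1)\bigr)\geqslant \tfrac12\ln n$ for large $n$ --- is the natural repair; your crude intermediate constant $r_w\geqslant\tfrac12|T_w^M|$ alone would not leave room for the discarded terms, so the sharper $1-\epsilon$ form is the one to use. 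Your second deviation, handling bounded $n$ by Theorem~\ref{1torus} (which uses only the partition-$n$ torus, where the paper's count is sound) rather than adding the Coxeter-torus term $\frac{M}{n}$ to the sum, is also correct. The technical work you defer (the cross-block collisions and the subfield count for $d\geqslant d_2$) is exactly the computation the paper carries out for case~\eqref{eq3} and in Lemma~\ref{isotropic}, and it goes through; so your outline is sound and is in fact more careful than the paper's own proof.
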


We believe, that for  the estimate in Theorem \ref{th1} it is possible to
prove  a better bound $|\omega(G)| \geqslant c\frac{\ln^k(n)}{n} |G|$, where
$k$ is any fixed power. However,   we cannot remove $n^{-1}$ in the
conjecture in view of the following theorem.

\begin{theorem}\label{th2}
Let $\omega=x^M$ be a power word, where $M=q-1$. Then for any  $N $, $c$
there exists $G=\PSL_n(q)$ such that $|G|\geqslant N$ and
$$|\omega(G)|< c|G|.$$
\end{theorem}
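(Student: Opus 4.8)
The plan is to prove Theorem \ref{th2}, which shows the factor $n^{-1}$ cannot be removed from Shalev's conjecture. Here $\omega = x^M$ with $M = q-1$, so for any element $g \in \SL_n(q)$, the value $\omega(g) = g^{q-1}$ lands in the set of elements whose order divides... actually the key observation is that $g^{q-1}$ is governed by the multiplicative orders of eigenvalues of $g$ over $\F_q$ and its extensions. The strategy is to show that the image $\omega(G)$ is contained in (or closely approximated by) a small union of conjugacy classes, so that $|\omega(G)|$ is a vanishingly small fraction of $|G|$ as $n \to \infty$ with $q$ fixed.

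First I would reduce from $\PSL_n(q)$ to $\SL_n(q)$ (or even $\GL_n(q)$), since passing to the quotient by the center changes sizes only by the bounded factor $|Z| \leq \gcd(n,q-1)$, and the image of a power word is compatible with the central quotient. Next, the heart of the matter is to understand $g^{q-1}$ for $g \in \GL_n(q)$. Using the multiplicative Jordan decomposition $g = su$ with $s$ semisimple and $u$ unipotent (commuting), we have $g^{q-1} = s^{q-1} u^{q-1}$. For the semisimple part, the eigenvalues of $s$ lie in $\overline{\F_q}$; if an eigenvalue lies in $\F_{q^d}^\times$, then raising to the power $q-1$ collapses the $\F_q$-rational part. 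The point is that $x \mapsto x^{q-1}$ maps $\F_q^\times$ to $1$ and severely restricts the possible semisimple parts that can appear, so the semisimple part of any element in the image is constrained to a small set. For the unipotent part, $u^{q-1} = u^{q-1}$, but in characteristic $p$ the map $u \mapsto u^{q-1}$ on unipotent elements is a fixed polynomial map whose image is again a proper, structurally restricted set of unipotent classes.

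The main technical step, which I expect to be the principal obstacle, is to bound the number of semisimple (and hence all) conjugacy classes that actually occur in $\omega(G) = \{g^{q-1} : g \in G\}$ and to verify that their total cardinality is $o(|G|)$ as $n \to \infty$. Concretely, I would parametrize semisimple classes of $\GL_n(q)$ by the characteristic polynomial, i.e.\ by a multiset of $\gal(\overline{\F_q}/\F_q)$-orbits of eigenvalues of total degree $n$, and then characterize exactly which eigenvalue configurations can arise after the $(q-1)$-power map. Because $M = q-1$ is tuned to the field, the surviving eigenvalues are forced to have orders dividing a fixed set determined by $q$, which drastically cuts down the number of admissible characteristic polynomials; a counting/partition argument then shows the fraction of $|G|$ swept out tends to $0$.

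Finally, I would make the fraction explicit enough to beat any prescribed $c$: for fixed $q$ and $M = q-1$, by letting $n \to \infty$ one obtains $|\omega(G)|/|G| \to 0$, and in particular there exists $G = \PSL_n(q)$ with $|G| \geqslant N$ and $|\omega(G)| < c|G|$ for the given $N$ and $c$. The delicate bookkeeping lies in showing that the centralizer sizes of the admissible classes do not conspire to make $\omega(G)$ large despite the small number of classes; I would control this by bounding each centralizer order from below by a factor that grows with the constrained block structure, ensuring each admissible class contributes only a small share of $|G|$. This combinatorial estimate on the distribution of eigenvalue orders under the $(q-1)$-power map is where the real work concentrates.
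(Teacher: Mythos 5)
Your reduction from $\PSL_n(q)$ to $\SL_n(q)$ is fine (the paper instead chooses $n$ with $(n,q-1)=1$, so that $\SL_n(q)\simeq\PSL_n(q)$), and constraining the semisimple part under the $(q-1)$-power map is indeed the right mechanism. The genuine gap is that the step carrying all the content --- ``a counting/partition argument then shows the fraction of $|G|$ swept out tends to $0$'' --- is never supplied, and the heuristic you offer for it is incorrect. The surviving eigenvalues are not confined to ``a fixed set determined by $q$'': the image of $\F_{q^d}^\times$ under $x\mapsto x^{q-1}$ is the subgroup of index $\gcd(q-1,q^d-1)=q-1$, a constant independent of $d$ and of $n$, so each eigenvalue orbit is cut down only by a factor $q-1$, and an element with boundedly many orbits suffers no drastic restriction at all. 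Concretely, on a Singer (Coxeter) torus of $\SL_n(q)$, which is cyclic of order $(q^n-1)/(q-1)$, the map $x\mapsto x^{q-1}$ is a \emph{bijection} whenever $(n,q-1)=1$, since $\gcd\bigl(q-1,(q^n-1)/(q-1)\bigr)=\gcd(q-1,n)=1$; these tori are precisely what produces the lower bound $|\omega(G)|\geqslant|G|/(2nM)$ of Theorem~\ref{1torus}, so the ratio $|\omega(G)|/|G|$ decays slowly, not drastically. What actually makes it tend to $0$ is a trade-off your sketch never formulates: the semisimple elements lying in tori conjugate to $T_\lambda$, where $\lambda$ is a partition with $k$ parts, have image of size at most $\frac{|G|}{|W(T_\lambda)|(q-1)^{k-1}}$, because $q-1$ divides every invariant factor of $T_\lambda$ (Proposition~\ref{BG_PSL} together with Lemma~\ref{power}); summing over partitions with exactly $k$ parts yields $\frac{s(n,k)}{n!\,(q-1)^{k-1}}$, where $s(n,k)$ are Stirling numbers of the first kind, and one must then argue in two regimes: small $k$, handled by $s(n,k)/n!\to0$ for $k=o(\ln n)$ (Hwang's asymptotics \cite{hw}), and large $k$, handled by the factor $(q-1)^{-(k-1)}$. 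Without this balance, the assertion that the image is $o(|G|)$ is simply a restatement of the theorem to be proved.

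A second, outright false step: your claim that $u\mapsto u^{q-1}$ restricts the unipotent part. Since $\gcd(q-1,p)=1$, this map is a bijection of the set of unipotent elements onto itself and even preserves Jordan type: writing $u=1+N$ with $N$ nilpotent, one has $u^{q-1}-1=N\cdot A$ where $A=(q-1)+\binom{q-1}{2}N+\cdots$ is an invertible polynomial in $N$, so $\mathrm{rank}\,(u^{q-1}-1)^j=\mathrm{rank}\,N^j$ for every $j$. Hence no restriction whatsoever comes from the unipotent side; everything must come from the semisimple part. Your instinct to keep non-semisimple elements in the analysis rather than discard them is actually sound --- the paper drops them via $|G\setminus G_{ss}|\leqslant\frac{5}{q-1}|G|$, a quantity that does not vanish for fixed $q$ --- but exploiting this requires carrying out the weighted count over semisimple classes $[t]$ of fibers of size $|G:C_G(t)|$ times the number of unipotent elements of $C_G(t)$, and that bookkeeping is also left undone in your proposal.
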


Notice also that in view of Theorem \ref{1torus}, there exists $N$ such that if $G=\PSL_n^\varepsilon(q)$ and $\vert
G\vert \geqslant N$, then $\vert \omega(G)\vert\geqslant \frac{\vert G\vert}{2nM}$, and this bound could be better than
the bound in Theorem~\ref{th1}.

\section{Preliminaries}
Out notation is standard and in general agree with that of \cite{Isaacs} and
\cite{ATLAS}. For integers $m_1,\dots,m_s$ by $(m_1,\dots,m_s)$ or
$\gcd\{m_1,\dots,m_s\}$ we denote their greatest common divisor, and by
$[m_1,\dots,m_s]$ or $\lcm\{m_1,\dots,m_s\}$ we denote their least common
multiple. We write $H\leq G$ and $H\unlhd G$, if $H$ is a subgroup or $H$ is
a normal subgroup of $G$ respectively. We recall some relations about $\gcd$.

\begin{lemma}\label{Zav}{\cite[Lemma 6(iii)]{za}}
Let $a,s,t\in\mathbb{N}$. Then
\begin{itemize}
  \item[{\em (a)}] $(a^s-1,a^t-1)=a^{(s,t)}-1$,
  \item[{\em (b)}] $(a^s+1,a^t+1)=
  \begin{cases} a^{(s,t)}+1 & \text{if both } s/(s,t) \text{ and } t/(s,t) \text{ are odd},\\
                (2,a+1) & \text{otherwise,}
  \end{cases}$
  \item[{\em (c)}] $(a^s-1,a^t+1)=
  \begin{cases} a^{(s,t)}+1 & \text{if } s/(s,t) \text{ is even and } t/(s,t) \text{ is odd},\\
                (2,a+1) & \text{otherwise.}
  \end{cases}$
\end{itemize}
\end{lemma}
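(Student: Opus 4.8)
The plan is to take part (a) as the foundation and then derive (b) and (c) from it, since the ``$+1$'' statements are governed by the factorisation $a^{2s}-1=(a^s-1)(a^s+1)$ together with a parity analysis of the cofactors.

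For (a) I would run the Euclidean algorithm at the level of exponents. Assuming $s\geq t$, the identity $a^s-1=a^{s-t}(a^t-1)+(a^{s-t}-1)$ gives $a^s-1\equiv a^{s-t}-1\imod{a^t-1}$, hence $(a^s-1,a^t-1)=(a^{s-t}-1,a^t-1)$. This mirrors exactly the subtractive step $(s,t)=(s-t,t)$ for the ordinary gcd, so iterating (equivalently, inducting on $s+t$) descends to $(a^{(s,t)}-1,a^{(s,t)}-1)=a^{(s,t)}-1$, which is (a).

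For (b) and (c) set $d=(s,t)$ and write $s=ds_1$, $t=dt_1$ with $(s_1,t_1)=1$; since $s_1,t_1$ are coprime they cannot both be even, so their parities drive the two cases. I would treat the two halves of each statement separately. For the divisibility ``$\geq$'' direction I would use the elementary facts $y+1\mid y^k-1$ when $k$ is even and $y+1\mid y^k+1$ when $k$ is odd, applied with $y=a^d$: for instance, in case (c) with $s_1$ even and $t_1$ odd these give $a^d+1\mid a^s-1$ and $a^d+1\mid a^t+1$, so $a^d+1$ divides the gcd. For the ``$\leq$'' direction I would take an odd prime $p$ dividing the gcd and let $e$ be the multiplicative order of $a$ modulo $p$; translating $a^s\equiv\pm1$, $a^t\equiv\pm1\imod p$ into divisibility and $2$-adic constraints on $e$, $s$, $t$ shows that such a $p$ can occur only in the asserted parity case, and that it then forces $a^d\equiv-1\imod p$, i.e. $p\mid a^d+1$. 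Matching the $p$-adic valuations (for which I would invoke the lifting-the-exponent lemma) yields equality of the odd parts, while in the complementary cases no odd $p$ survives, so the gcd is a power of $2$. Part (b) follows the same scheme, or alternatively reduces to (a) and (c) through $(a^{2s}-1,a^{2t}-1)=a^{2d}-1$.

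The main obstacle is the exact $2$-part, that is, justifying the term $(2,a+1)$ and ruling out stray higher powers of $2$ in the sharp cases. This forces a case split on the parity of $a$: when $a$ is even all of $a^{s}\pm1$ are odd, so the gcd is odd and the branch ``$(2,a+1)=1$'' is immediate; when $a$ is odd I would pin down $v_2$ of each side by a careful $2$-adic valuation computation (again via lifting-the-exponent, treating $a\equiv1$ and $a\equiv3\imod 4$ separately), checking that $v_2(a^d+1)$ and $v_2$ of the gcd coincide in the sharp cases and that the gcd contributes exactly one factor of $2$ in the remaining cases. This bookkeeping of powers of $2$ is the only genuinely delicate point; everything else is a mechanical consequence of (a) together with the order argument.
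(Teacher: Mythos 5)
The paper offers no proof of this lemma at all: it is imported verbatim from Zavarnitsine \cite[Lemma 6(iii)]{za}, so there is no internal argument to compare yours against, and your proposal must stand on its own. It does. Part (a) by Euclidean descent on exponents is the standard argument and is complete as sketched. For (b) and (c) your two-sided scheme is sound: the divisibility direction follows from $y+1\mid y^k-1$ ($k$ even) and $y+1\mid y^k+1$ ($k$ odd) with $y=a^d$, $d=(s,t)$; and the order argument does exactly what you claim, since for an odd prime $p$ dividing, say, $(a^s-1,a^t+1)$, the congruence $a^t\equiv-1\pmod p$ forces $v_2(e)=v_2(t)+1$ for the order $e$ of $a$ modulo $p$, while $e\mid s$ forces $v_2(e)\leqslant v_2(s)$; hence odd primes survive only when $v_2(s)>v_2(t)$, i.e.\ exactly when $s/(s,t)$ is even and $t/(s,t)$ is odd, and then $e\mid (s,2t)=2d$ together with $e\nmid d$ gives $a^d\equiv-1\pmod p$. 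Two points deserve to be made explicit when you write this up. First, in the lifting-the-exponent step one gets $v_p(a^s-1)=v_p(a^d+1)+v_p(s_1)$ and $v_p(a^t+1)=v_p(a^d+1)+v_p(t_1)$, where $s=ds_1$, $t=dt_1$; the conclusion $v_p\bigl(\gcd\bigr)=v_p(a^d+1)$ then rests on $(s_1,t_1)=1$, which gives $\min\{v_p(s_1),v_p(t_1)\}=0$ --- this is where coprimality of $s_1,t_1$ is genuinely used, beyond the parity bookkeeping. Second, the $2$-adic analysis you defer reduces to two one-line facts for odd $a$: $v_2(a^{2m}+1)=1$ for all $m\geqslant 1$, and $\min\{v_2(a-1),v_2(a+1)\}=1$; combined with $(a-1)(a+1)\mid a^s-1$ for even $s$, these settle both the equality $v_2\bigl(\gcd\bigr)=v_2(a^d+1)$ in the sharp cases and the value $(2,a+1)$ in the remaining ones, with no need for lifting the exponent at $p=2$. (Your parenthetical alternative of deducing (b) from (a) and (c) via $(a^{2s}-1,a^{2t}-1)=a^{2d}-1$ is not a complete reduction as stated, since it only bounds the gcd by a divisor of $(a^d-1)(a^d+1)$, but it is moot because your primary route for (b) works.) With these details filled in, your proof is correct and, unlike the paper, self-contained.
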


Recall that any finite abelian group $A$ can be uniquely presented as a
product of cyclic groups $A=\mathbb{Z}_{d_1}\times\ldots\times
\mathbb{Z}_{d_k}$, where $\mathbb{Z}_{d_i}$ is a cyclic group of order $d_i$
and $d_1$ divides $d_2$, $d_2$ divides $d_3$,\ldots, $d_{k-1}$ divides $d_k$.
We use the notation $A=d_1\times\ldots\times d_k$ and refer to it as a {\em
standard decomposition} of $A$ for brevity.

By $p$ we always denote some prime number and $q$ is a power of $p$, ${\F}_q$ denotes a finite field of $q$ elements,
and $\overline{\F}_q$ is  an algebraic closure of ${\F}_q$. The symmetric group on $n$ elements is denoted by $\Sym_n$.
By $\diag(\lambda_1,\lambda_2,\ldots,\lambda_n)$ we denote a diagonal  $n\times n$-matrix with
$\lambda_1,\lambda_2,\ldots,\lambda_n$ on the diagonal. By $\bd(T_1,T_2,\ldots,T_n)$ we denote a block-diagonal matrix
with square blocks $T_1,T_2,\ldots,T_n$ on the diagonal.

Let $\overline{G}$ be a connected  simple algebraic group over an algebraic closure $\overline{\F}_p$ of a finite field
$\F_p$. We say that $\sigma$ is a {\em Steinberg endomorphism} (or a {\em Frobenius map}), if the set
$\overline{G}_\sigma$ of $\sigma$-stable points of $\overline{G}$ is finite. Any group $G=O^{p'}(\overline{G}_\sigma)$
is called a finite group of Lie type. If $\overline{G}$ is of type $A_{n}$, then $O^{p'}(\overline{G}_\sigma)$ is a
homomorphic image of either $\SL_{n+1}(q)$ or $\SU_{n+1}(q)$ for some $q$ and we say that $G$ is a group
$A_n^\varepsilon(q)$ in this case, where $\varepsilon\in\{+,-\}$ and $A_n^+(q)$ is a homomorphic image of
$\SL_{n+1}(q)$, while $A_n^-(q)$ is a homomorphic image of $\SU_{n+1}(q)$. Recall that an element is regular if its
centralizer in the corresponding group over the algebraic closure of ${\F}_q$ is equal to the Lie rank of the group
$G$. In particular, a semisimple element is regular, if the connected component of its centralizer is a maximal torus.
Thus every regular semisimple element belongs to exactly  one maximal torus. Recall also that every semisimple element
of $\overline{G}_\sigma$ lies in a $\sigma$-stable maximal torus $\overline{T}$ of~$\overline{G}$.

If $\overline{T}$ is a maximal $\sigma$-stable torus of $\overline{G}$, then
$T=\overline{T}\cap G$ is called a {\em maximal torus} of $G$ and
$N(G,T)=N_{\overline{G}}(\overline{T})\cap G$ is called an {\em algebraic
normalizer}. Clearly $N(G,T)\leq N_G(T)$, and the next lemma gives a
sufficient condition for the equality.

\begin{lemma}\label{NomMaxTorus}
Let $T$ be a maximal torus of a finite group of Lie type
$G=O^{p'}(\overline{G}_\sigma)$. Assume also that $T$ contains a regular
element $t$. Then $N(G,T)=N_G(T)$.
\end{lemma}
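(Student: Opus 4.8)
The plan is to prove the nontrivial inclusion $N_G(T)\leq N(G,T)$, since the reverse inclusion $N(G,T)\leq N_G(T)$ is already noted in the text. Fix an arbitrary $g\in N_G(T)$. The goal will be to show that $g$ normalizes the whole algebraic torus $\overline{T}$, not merely its rational points $T=\overline{T}\cap G$; once $g\overline{T}g^{-1}=\overline{T}$ is established, we immediately get $g\in N_{\overline{G}}(\overline{T})\cap G=N(G,T)$, and the lemma follows. The entire argument rests on using the regular element $t$ to characterize $\overline{T}$ intrinsically inside $\overline{G}$, in a way that is visibly stable under conjugation by elements of $G$.

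First I would identify $\overline{T}$ by means of $t$. Since $t$ lies in the torus $T\subseteq\overline{T}$, it is semisimple; being regular by hypothesis, its connected centralizer $C_{\overline{G}}(t)^{\circ}$ is a maximal torus of $\overline{G}$. Now $\overline{T}$ is a maximal torus containing $t$, hence $\overline{T}\subseteq C_{\overline{G}}(t)$, and being connected $\overline{T}\subseteq C_{\overline{G}}(t)^{\circ}$; comparing dimensions (both are maximal tori) forces $\overline{T}=C_{\overline{G}}(t)^{\circ}$. Equivalently, as recalled in the excerpt, a regular semisimple element lies in exactly one maximal torus, and that torus is $\overline{T}$.

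Next I would transport this description by $g$. Because $g$ normalizes $T$ and $t\in T$, the element $s=gtg^{-1}$ again lies in $T\subseteq\overline{T}$. Conjugation by $g\in G\subseteq\overline{G}$ is an automorphism of the algebraic group $\overline{G}$, so it preserves semisimplicity and centralizer dimension; thus $s$ is again regular semisimple and $g\,C_{\overline{G}}(t)^{\circ}g^{-1}=C_{\overline{G}}(gtg^{-1})^{\circ}=C_{\overline{G}}(s)^{\circ}$. Applying the uniqueness statement of the previous paragraph to the regular semisimple element $s\in\overline{T}$ gives $C_{\overline{G}}(s)^{\circ}=\overline{T}$. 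Combining these identities yields $g\overline{T}g^{-1}=g\,C_{\overline{G}}(t)^{\circ}g^{-1}=C_{\overline{G}}(s)^{\circ}=\overline{T}$, so $g\in N_{\overline{G}}(\overline{T})\cap G=N(G,T)$, as required. The only point that demands care—what I regard as the crux of the argument—is the passage from $T$ to $\overline{T}$: one must be sure that normalizing the finite torus $T$ forces normalization of the full algebraic torus, and this is exactly what the regularity of $t$ delivers, through the conjugation-invariance of regularity together with the uniqueness of the maximal torus containing a regular semisimple element.
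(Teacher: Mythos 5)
Your proof is correct and rests on the same key fact as the paper's: regularity of $t$ forces a connected centralizer to coincide with $\overline{T}$, so that normalizing the finite torus $T$ forces normalizing the algebraic torus $\overline{T}$. The paper's version is marginally more compact --- it sandwiches $\overline{T}\leq C_{\overline{G}}(T)^0\leq C_{\overline{G}}(t)^0=\overline{T}$ and then invokes $C_{\overline{G}}(T)^0\unlhd N_{\overline{G}}(T)$, which makes the normalization automatic and avoids your element-by-element transport of $t$ to $gtg^{-1}$ --- but the two arguments are essentially the same.
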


\begin{proof}
Since $t$ is regular, we obtain $\overline{T}\leq C_{\overline{G}}(T)^0\leq
C_{\overline{G}}(t)^0=\overline{T}$. Now $C_{\overline{G}}(T)^0\unlhd
N_{\overline{G}}(T)$, so $N_{\overline{G}}(T)\leq
N_{\overline{G}}(\overline{T})$ and the lemma follows.
\end{proof}

We denote the Weyl group $N_{\overline{G}}(\overline{T})/\overline{T}$ of $A_{n-1}$ by $W$. Since all maximal tori are
conjugate, $W$ does not depend on a particular choice of $\overline{T}$ and is isomorphic to $\Sym_n$. If we fix a
$\sigma$-stable torus $\overline{H}$ of $\overline{G}$, then $\sigma$ leaves $N_{\overline{G}}(\overline{H})$
invariant, and so induces an automorphism on $W=N_{\overline{G}}(\overline{H})/\overline{H}$ and we denote this
automorphism also by $\sigma$. We say that $u,w\in W$ are $\sigma$-conjugate, if there exists $v\in W$ such that
$u=v^{-1}wv^\sigma$. There is a bijection between the $G$-classes of $\sigma$-stable maximal tori of $\overline{G}$ and
the $\sigma$-conjugacy classes of $W$, (see~\cite[Propositions 3.3.1, 3.3.3]{Car}). Moreover, by~\cite[Proposition
3.3.6]{Car} if a  torus $T=\overline{H}^g\cap G$ corresponds to an element $w$ of $W$, we have
$$N(G,T)/T\simeq  C_{W,\sigma}(w)=\{x\in W\mid  x^{-1}wx^\sigma =w\}.$$

Given $w\in W$, a representative of the corresponding conjugacy class of maximal tori we denote by $T_w$, and we denote
$N(G,T)/T$ by $W(T)$.  If $G=A_n^+(q)$, then we may choose $\overline{H}$ so that $\sigma$ acts trivially on
$N_{\overline{G}}(\overline{H})/\overline{H}$ (in this case $\overline{H}_\sigma$ is of order $(q-1)^n$), so in this
case there is a bijection between the $G$-classes of $\sigma$-stable maximal tori of $\overline{G}$ and conjugacy
classes of $W$. If $G=A_n^-(q)$, then we also may choose $\overline{H}$ so that $\sigma$ acts trivially on
$N_{\overline{G}}(\overline{H})/\overline{H}$ (in this case $\overline{H}_\sigma$ is of order $(q+1)^n$), so in this
case there is a bijection between the $G$-classes of $\sigma$-stable maximal tori of $\overline{G}$ and conjugacy
classes of $W$. In both cases, we obtain that for a maximal torus $T=T_w$ the identity $W(T)\simeq C_W(w)$ holds.

Each element of $\Sym_n$ can be uniquely expressed as a product of disjoint
cycles. Lengths of these cycles define a set of integers, which is called the
cycle-type of the element. Two permutations are conjugate if and only if they
have the same cyc\-le-ty\-pe. Thus the conjugacy classes of $\Sym_n$ are in
one to one correspondence with the partitions of $n$, and so the conjugacy
classes of maximal tori in $A_{n-1}^\varepsilon(q)$ is in one to one
correspondence with the partitions of $n$.

The structure of maximal tori in linear groups  is well known. For our
purposes we will use the description from~\cite{bg}.

\begin{proposition}\label{prop2}{\cite[Proposition 2.1]{bg}}
Let $T$ be a maximal torus of $G=\SL_{n}^\varepsilon(q)$ corresponding to the
partition $n_1+n_2+\ldots+n_m=n$. Let $U$ be a subgroup of
$\GL_n(\overline{\F}_p)$ consisting of all diagonal matrices of the form
$\bd(D_1, D_2,\ldots,D_m)$, where $$D_i=\diag(\lambda_i,
\lambda_i^{\varepsilon q},\ldots, \lambda_i^{(\varepsilon q)^{n_i-1}})$$ and
$\lambda_i^{(\varepsilon q)^{n_i}}=1$ for $i\in\{1,2,\ldots,m\}$. Then $T_w$
and $U\cap\SL_n(\overline{\F}_p)$ are conjugate in~$\GL_n(\overline{\F}_p)$.
\end{proposition}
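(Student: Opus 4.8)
The plan is to realise $G=\SL_n^\varepsilon(q)$ as $\overline{G}_\sigma$ with $\overline{G}=\SL_n(\overline{\F}_p)$ and $\overline{H}$ the torus of diagonal matrices, and then to compute the fixed points of the relevant twisted Frobenius explicitly, block by block. First I would fix the Steinberg endomorphism $\sigma$ in a form adapted to the torus: for $\varepsilon=+$ it is the standard Frobenius raising every matrix entry to the $q$-th power, while for $\varepsilon=-$ it is that Frobenius composed with the graph automorphism $x\mapsto(\tr{x})^{-1}$. On the diagonal torus $\overline{H}$ both rules act by "raise each entry to the power $\varepsilon q$", where $\varepsilon q$ is read as $q$ in the linear case and as $-q$ in the unitary case; recording this uniformly is exactly what allows the two cases to be handled simultaneously. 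The normalization of $\overline{H}$ already adopted in the discussion preceding the statement, in which $\sigma$ acts trivially on $N_{\overline{G}}(\overline{H})/\overline{H}$ (with $\overline{H}_\sigma$ of order $(q-\varepsilon1)^n$), is the one that makes this clean rule valid.

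Next I would invoke the correspondence recalled just before the statement: by \cite[Propositions 3.3.1, 3.3.3]{Car} the $G$-classes of $\sigma$-stable maximal tori are in bijection with the conjugacy classes of $W\simeq\Sym_n$, hence with partitions $n_1+\dots+n_m=n$, and the torus $T_w$ attached to $w$ arises by a standard twist. Concretely, choosing $g\in\overline{G}$ with $g^{-1}\sigma(g)=\dot{w}$ a representative of $w$, the torus $\overline{T}=g\overline{H}g^{-1}$ is $\sigma$-stable and $T_w=\overline{T}\cap G=g\,\{h\in\overline{H}\mid \dot{w}\,\sigma(h)\,\dot{w}^{-1}=h\}\,g^{-1}$. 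Thus, up to conjugation by the element $g\in\GL_n(\overline{\F}_p)$, it suffices to describe the fixed-point group of the twisted Frobenius $h\mapsto\dot{w}\,\sigma(h)\,\dot{w}^{-1}$ inside $\overline{H}$.

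Then I would solve these fixed-point equations one cycle at a time. Writing $h=\diag(a_1,\dots,a_n)$ and taking $w$ to be a product of disjoint cycles of lengths $n_1,\dots,n_m$, the twisted Frobenius permutes the diagonal entries along the cycles and raises them to the power $\varepsilon q$. On a single cycle $(j_1\,j_2\,\cdots\,j_{n_i})$ the fixed-point condition becomes the chain $a_{j_2}=a_{j_1}^{\varepsilon q},\ a_{j_3}=a_{j_2}^{\varepsilon q},\dots$, which forces $a_{j_\ell}=a_{j_1}^{(\varepsilon q)^{\ell-1}}$ together with the closing relation $a_{j_1}^{(\varepsilon q)^{n_i}}=a_{j_1}$. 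Setting $\lambda_i=a_{j_1}$ reproduces precisely the block $D_i=\diag(\lambda_i,\lambda_i^{\varepsilon q},\dots,\lambda_i^{(\varepsilon q)^{n_i-1}})$ with the order condition on $\lambda_i$ recorded in the statement, and assembling the blocks over $i=1,\dots,m$ yields the block-diagonal group $U$; intersecting with $\SL_n(\overline{\F}_p)$ imposes the single determinant-one relation. Hence $T_w=g\,(U\cap\SL_n(\overline{\F}_p))\,g^{-1}$ with $g\in\GL_n(\overline{\F}_p)$, which is the assertion.

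The step I expect to be the main obstacle is the bookkeeping in the unitary case. There one must verify that the automorphism $x\mapsto(\tr{x})^{-1}$, once combined with the chosen cycle representative $\dot{w}$ and the conjugating element $g$, really does act on $\overline{H}$ by the clean rule $a\mapsto a^{-q}$, so that the single symbol $\varepsilon q$ genuinely unifies both cases and the closing relation comes out in the stated form. This forces one to choose $\dot{w}$ and $g$ compatibly with the pinning of $\overline{G}$, and to check that the sign introduced by the graph automorphism is absorbed exactly into $\varepsilon$; once this compatibility is established the cycle computation of the previous paragraph goes through verbatim.
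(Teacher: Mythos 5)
This proposition is not proved in the paper at all: it is quoted verbatim from \cite[Proposition 2.1]{bg}, so there is no internal proof to compare against. Your argument is the standard one (and is essentially how such statements are established in the literature): realize $G$ as the fixed points of a Steinberg endomorphism $\sigma$ on $\overline{G}=\SL_n(\overline{\F}_p)$, use the Lang--Steinberg theorem to write a $\sigma$-stable torus in the class of $w$ as $g\overline{H}g^{-1}$ with $g^{-1}\sigma(g)=\dot{w}$, transport the $\sigma$-fixed points to the fixed points of the twisted map $h\mapsto\dot{w}\,\sigma(h)\,\dot{w}^{-1}$ on the diagonal torus, and solve cycle by cycle. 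The outline is correct, and I see no gap in it.

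Two remarks. First, the obstacle you flag in the unitary case dissolves if you define $\SU_n(q)$ by the identity Hermitian form, i.e.\ take $\sigma(x)=F\bigl((\tr{x})^{-1}\bigr)$ with $F$ the entrywise $q$-power map: then $\sigma$ stabilizes the diagonal torus, acts on it entrywise by $a\mapsto a^{-q}$, and fixes every permutation matrix (since $\tr{P}=P^{-1}$), so $\sigma$ acts trivially on $W$ and ordinary conjugacy classes of $\Sym_n$ (i.e.\ partitions) parametrize the tori, exactly as in the paper's preliminary discussion; no compatibility between $\dot{w}$, $g$ and a pinning needs to be arranged beyond this. Note also that conjugation by any monomial representative $\dot{w}$ permutes diagonal entries in the same way as the underlying permutation, so the choice of lift is immaterial. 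Second, your cycle computation produces the closing relation $\lambda_i^{(\varepsilon q)^{n_i}}=\lambda_i$, i.e.\ $\lambda_i^{(\varepsilon q)^{n_i}-1}=1$, equivalently $\lambda_i^{q^{n_i}-(\varepsilon 1)^{n_i}}=1$; this is the correct condition, and the condition $\lambda_i^{(\varepsilon q)^{n_i}}=1$ as literally printed in the statement is a typo (it would force $\lambda_i=1$ in characteristic $p$). The corrected form is the one actually used later in the paper, e.g.\ in Lemmas \ref{isotropic} and \ref{torus_i+n-i}, and your derivation agrees with it.
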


\begin{proposition}\label{BG_PSL}{\cite[Theorem 2.1]{bg}}
Let $n\geqslant 2$ and $n_1+n_2+\ldots+ n_s=ò$ be a partition, and choose a
maximal torus $T$  of $\SL_n(q)$ corresponding to the partition. For $1
\leqslant i \leqslant s$, define
$$d_i =\underset{1\leqslant j_1<\ldots<j_i\leqslant s} \lcm \gcd \{q^{n_{j_1}}-1, \ldots, q^{n_{j_i}}-1\}.$$
Then $d_i$ divides $d_{i'}$ for $i > i'$, and
$$T= d_1\times d_2\times\ldots\times d_{s-1}\times\frac{d_s}{q-1}$$
is the standard decomposition of $T$.
Let $\widetilde{T}$ be the image of $T$ in $\PSL_n(q)$. Put $d=(n,q-1)$ and
$d'=(n/(n_1,\ldots,n_s),q-1)$. Then
$$\widetilde{T}=\frac{d_1}{d(q-1)}$$ is the standard decomposition of
$\widetilde{T}$ if $s=1$, and
$$\widetilde{T}=d_1\times d_2\times\ldots\times d_{s-2}\times
\frac{d_{s-1}}{d'}\times\frac{d'd_s}{d(q-1)}$$ is the standard decomposition
of $\widetilde{T}$ if $s>1$.
\end{proposition}

\begin{proposition}\label{BG_PSU}{\cite[Theorem 2.2]{bg}}
Let $n\geqslant2$  and $n_1+n_2+\ldots+ n_s=ò$ be a partition, and consider a
maximal torus $T$ of  $\SU_n(q^2)$ corresponding to the partition. For
$1\leqslant i\leqslant s$, define
$$d_i =\underset{1\leqslant j_1<\ldots<j_i\leqslant s}\lcm\gcd\{q^{n_{j_1}}-(-1)^{n_{j_1}},\ldots, q^{n_{j_i}}-(-1)^{n_{j_i}}\}.$$  Then $d_i$ divides $d_{i'}$ for $i > i'$, and
$$T= d_1\times d_2\times\ldots\times d_{s-1}\times\frac{d_s}{q+1}$$
is the standard decomposition of $T$.
Let $\widetilde{T}$ be the image of $T$ in $\PSU_n(q^2)$. Put $d=(n,q+1)$ and
$d'=(n/(n_1,\ldots,n_s),q+1)$. Then
$$\widetilde{T}\simeq\frac{d_1}{d(q+1)}$$ is the standard decomposition of
$\widetilde{T}$ if $s=1$, and $$\widetilde{T}\simeq d_1\times
d_2\times\ldots\times d_{s-2}\times
\frac{d_{s-1}}{d'}\times\frac{d'd_s}{d(q+1)}$$ is the standard decomposition
of $\widetilde{T}$ if $s>1$.
\end{proposition}


If $G$ is equal to $\GL_n(q),\U_n(q),\SL_n(q),\SU_n(q)$, an element is semisimple regular if and only if its minimal
polynomial is equal to its characteristic polynomial, i.e. if and only if its characteristic polynomial has no repeated
roots (see \cite{fg}).

We briefly describe the general idea of proofs of the main results. We need to compute the  lower and upper bounds for
the ratio $\frac{|w(G)|}{|G|}$. We obtain the bounds only for regular semisimple elements, and then apply them to
obtain the bounds for the hole group. So we need to consider maximal tori of $G$. Since any regular semisimple element
belongs to a unique maximal torus, we can take then a union (and this union is disjoint) of tori, thus avoiding
repetition. Finally, instead of considering all maximal tori we could restrict ourselves by  a finite number (in our
paper one or two) maximal tori, and then multiply the bound by the index of normalizer.

\section{Example}
In this section, we give an example that the constant $c$ in the conjecture
depends on the word $\omega$.

\begin{lemma}\label{ss}
Assume that $G\simeq A_{n-1}^\varepsilon (q)$, and let $G_{ss}$ be the set of
all semisimple elements of $G$. Let $\omega$ be a power word $x^M$. Assume
that there exists $N$ such that the inequality
$|\omega(T)|\leqslant\frac{|T|}{N}$ holds for every maximal torus $T$ of $G$.
Then
$$|\omega(G_{ss})|\leqslant \frac{|G|}{N}.$$
\end{lemma}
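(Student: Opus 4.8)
The plan is to bound from below the fibers of the power map $\omega\colon g\mapsto g^M$ restricted to $G_{ss}$, and then divide. Since $\omega$ sends semisimple elements to semisimple elements, $\omega(G_{ss})\subseteq G_{ss}\subseteq G$, and the set $G_{ss}$ is partitioned by the fibers $\{g\in G_{ss}\mid g^M=y\}$ as $y$ ranges over $\omega(G_{ss})$. Hence
$$|G_{ss}|=\sum_{y\in\omega(G_{ss})}|\{g\in G_{ss}\mid g^M=y\}|,$$
and it suffices to show that each such fiber contains at least $N$ elements: this gives $|G_{ss}|\geqslant N\cdot|\omega(G_{ss})|$, and then $|\omega(G_{ss})|\leqslant |G_{ss}|/N\leqslant |G|/N$.

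First I would record the local statement on a single torus. A maximal torus $T$ is abelian, so $\omega|_{T}\colon t\mapsto t^M$ is a group endomorphism of $T$ with image $\omega(T)$ and kernel $K_T=\{t\in T\mid t^M=1\}$. Thus $|K_T|=|T|/|\omega(T)|$, and the hypothesis $|\omega(T)|\leqslant |T|/N$ gives $|K_T|\geqslant N$ for every maximal torus $T$ of $G$.

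Next I would produce $N$ elements inside an arbitrary fiber. Fix $y\in\omega(G_{ss})$ and write $y=t^M$ with $t\in G_{ss}$. Since every semisimple element lies in a maximal torus, choose $T$ with $t\in T$. For each $s\in K_T$ the element $ts$ lies in $T$, hence is semisimple, and commutativity of $T$ yields $(ts)^M=t^M s^M=t^M=y$. Therefore the coset $tK_T$ is contained in $\{g\in G_{ss}\mid g^M=y\}$, and since $|tK_T|=|K_T|\geqslant N$ the fiber has at least $N$ elements, as required.

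The argument reduces to a counting identity once the coset $tK_T$ is exhibited, so I expect no serious obstacle. The only points requiring care are that $\omega|_{T}$ is genuinely a homomorphism, which forces the computation to take place inside a single abelian torus rather than across $G$, and that each produced element $ts$ is honestly semisimple; both are secured by placing $t$ inside a maximal torus. Note that regularity plays no role here, so the bound applies to all semisimple elements and not merely to the regular ones.
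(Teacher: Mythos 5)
Your proof is correct, but it follows a genuinely different route from the paper's. You bound the fibers of the power map from below: every $y\in\omega(G_{ss})$ has a preimage $t$ lying in some maximal torus $T$, and since $T$ is abelian the restriction of $\omega$ to $T$ is an endomorphism whose kernel $K_T$ satisfies $|K_T|=|T|/|\omega(T)|\geqslant N$; the coset $tK_T$ sits inside the fiber over $y$, giving $|G_{ss}|\geqslant N\,|\omega(G_{ss})|$. The paper instead works from above on the image: it decomposes $G_{ss}=\bigcup_\lambda T_\lambda^G$ over conjugacy classes of maximal tori indexed by partitions $\lambda$ of $n$, bounds the number of conjugates of $T_\lambda$ by $|G|/(|T_\lambda|\cdot|C_W(\sigma_\lambda)|)$ using the algebraic normalizer, deduces $|\omega(T_\lambda^G)|\leqslant |G|/(N|C_W(\sigma_\lambda)|)$, and then sums over $\lambda$ using the class-equation identity $\sum_\lambda 1/|C_W(\sigma_\lambda)|=1$ in $\Sym_n$. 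Both arguments use that $\omega$ is a one-variable power word, but in different places: the paper needs it only so that $\omega\bigl(\bigcup_\lambda T_\lambda^G\bigr)=\bigcup_\lambda \omega(T_\lambda^G)$, whereas you need it (together with commutativity of $T$) to make $\omega|_T$ a homomorphism and to run the translation trick $(ts)^M=t^Ms^M$. Your route is more elementary---no normalizers, Weyl-group centralizers, or summation identity---and it yields the marginally sharper conclusion $|\omega(G_{ss})|\leqslant |G_{ss}|/N$; what the paper's route buys is that the torus-class bookkeeping it sets up (counting conjugates of a torus via $N(G,T)$ and $W(T)$) is exactly the machinery reused in Lemma~\ref{conjugates_i+n-i}, Theorem~\ref{1torus}, and the proofs of the main theorems.
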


\begin{proof}
Let $T_\lambda$ be a maximal torus corresponding to a partition $\lambda$,
and $\sigma_\lambda$ be a  representative of the conjugacy class of $\Sym_n$
corresponding to $\lambda$. Then $G_{ss}=\underset{\lambda}\bigcup\
T_\lambda^G$, where $\lambda$ runs through all partitions of $n$. Since
$$|N_G(T_\lambda)|\geqslant|N(G,T_\lambda)|=|T_\lambda|\cdot|C_W(\sigma_\lambda)|,$$
the number of conjugate tori in $T_\lambda^G$ equals
$\frac{|G|}{|N_G(T_\lambda)|}$ and is not greater than
$\frac{|G|}{|T_\lambda|\cdot|C_W(\sigma_\lambda)|}$. Hence, we have
\begin{multline*} |\omega(T_\lambda^G)|\leqslant \sum_{T\in T_\lambda^G}\vert
\omega(T)\vert\leqslant
\frac{|G|}{|T_\lambda|\cdot|C_W(\sigma_\lambda)|}|\omega(T_\lambda)|\leqslant  \\
\leqslant \frac{|G|}{|T_\lambda|\cdot|C_W(\sigma_\lambda)|}
\frac{|T_\lambda|}{N} = \frac{|G|}{N|C_W(\sigma_\lambda)|}.
\end{multline*}
Notice that
$$\sum_{\lambda} \frac{1}{|C_W(\sigma_\lambda)|} = \sum_{\lambda} \frac{|Cl(\sigma_{\lambda})|}{|\Sym_n|} =  \frac{1}{|\Sym_n|}\sum_{\lambda} |Cl(\sigma_{\lambda})| =1, $$
where $Cl(\sigma_{\lambda})$ is the conjugacy class of $\sigma_{\lambda}$ in
$\Sym_n$. Therefore,
$$
|\omega(G_{ss})|=|\omega(\bigcup_{\lambda} T_\lambda^G)| = |\bigcup_{\lambda} \omega(T_\lambda^G)| \leqslant \sum_\lambda |\omega(T_\lambda^G)|\leqslant \sum_\lambda \frac{|G|}{N|C_W(\sigma_\lambda)|}=\frac{|G|}{N}.$$
\end{proof}
The following lemma is clear.
\begin{lemma}\label{power}
Assume $T=d_1\times d_2\times\cdots\times d_{s}$ is the standard
decomposition of a finite abelian group $T$, and $\omega(x)=x^M$ is a power
word. Then
$$\omega(T) =\frac{d_1}{(M,d_1)}\times \frac{d_2}{(M,d_2)}\times\cdots\times
\frac{d_s}{(M,d_s)}$$ is the standard decomposition of $\omega(T)$.
\end{lemma}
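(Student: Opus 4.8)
The plan is to observe that on the abelian group $T$ the word map $\omega\colon t\mapsto t^M$ is simply the $M$-th power endomorphism, so its image $\omega(T)=T^M=\{t^M\mid t\in T\}$ is a subgroup. Moreover this endomorphism is compatible with direct products: writing $T=\mathbb{Z}_{d_1}\times\cdots\times\mathbb{Z}_{d_s}$, raising to the $M$-th power acts coordinatewise, so $\omega(T)=\omega(\mathbb{Z}_{d_1})\times\cdots\times\omega(\mathbb{Z}_{d_s})$. It therefore suffices to understand the image of a single cyclic factor.

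For a cyclic group $\mathbb{Z}_d$ with generator $g$, the element $g^M$ has order $d/(M,d)$, so $\omega(\mathbb{Z}_d)=\langle g^M\rangle$ is cyclic of order $d/(M,d)$. Combining this with the previous paragraph yields
$$\omega(T)=\frac{d_1}{(M,d_1)}\times\cdots\times\frac{d_s}{(M,d_s)}$$
as an abstract direct product of cyclic groups.

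The only point that needs checking is that this product is already written in standard form, that is, that the chain of divisibilities is preserved: $\frac{d_i}{(M,d_i)}$ divides $\frac{d_{i+1}}{(M,d_{i+1})}$ for each $i$. Here I would use the identity $\frac{d}{(M,d)}=\frac{\lcm(M,d)}{M}$, which follows from $(M,d)\cdot\lcm(M,d)=Md$. Since $d_i\mid d_{i+1}$ in the standard decomposition of $T$, we have $\lcm(M,d_i)\mid\lcm(M,d_{i+1})$, and dividing through by $M$ gives $\frac{d_i}{(M,d_i)}\mid\frac{d_{i+1}}{(M,d_{i+1})}$. This confirms that the displayed product is indeed the standard decomposition of $\omega(T)$.

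I expect no genuine obstacle, which is consistent with the paper's remark that the lemma is clear; the statement is essentially bookkeeping about the $M$-th power map on finite abelian groups. The single substantive step is the divisibility verification in the last paragraph, and the $\lcm$-identity makes that immediate.
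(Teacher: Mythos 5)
Your proposal is correct and complete: the coordinatewise action of the $M$-th power map, the order computation $|g^M|=d/(M,d)$ in a cyclic factor, and the identity $\frac{d}{(M,d)}=\frac{\lcm(M,d)}{M}$ to verify the divisibility chain together establish exactly what is claimed. The paper offers no proof at all (it introduces the lemma with ``The following lemma is clear''), so your argument is simply the natural filling-in of the details the authors omitted, with the divisibility check being the only step that genuinely needed writing down.
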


\begin{lemma}\label{torusSL}
Let $n$ be a positive integer and  $p$ be a prime. Choose odd $l$ so that
$(l,n)=1$ (in particular $l$ could be equal to $1$). Assume that
$\widetilde{T}$ be a maximal torus of $\widetilde{G}=\PSL_n^\varepsilon
(p^l)$. Consider $\omega(x)=x^M$, where $M=p^n-(\varepsilon1)^n$. Then
$$|w(\widetilde{T})|\leqslant \frac{|\widetilde{T}|n}{p-1}.$$
\end{lemma}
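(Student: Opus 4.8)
The plan is to descend the problem from $\widetilde G=\PSL_n^\varepsilon(p^l)$ to its covering group. Write $G=\SL_n^\varepsilon(p^l)$, let $T$ be the maximal torus of $G$ lying over $\widetilde T$, and let $Z=Z(G)$, so that $|Z|=(n,p^l-\varepsilon1)\le n$ and $\widetilde T=T/(T\cap Z)$. Since the projection $\pi\colon G\to\widetilde G$ is a homomorphism and $\omega$ is a power map, $\omega(\pi(t))=\pi(t)^M=\pi(t^M)=\pi(\omega(t))$, whence $\omega(\widetilde T)=\pi(\omega(T))$ and $|\omega(\widetilde T)|\le|\omega(T)|$; moreover $|T|=|T\cap Z|\cdot|\widetilde T|\le n\,|\widetilde T|$. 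Combining these two observations, it suffices to prove the cleaner inequality $|\omega(T)|\le|T|/(p-1)$, and the factor $n$ in the statement is then produced entirely by the centre. This reduction is the reason the claimed bound carries the harmless factor $n$.

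To prove $|\omega(T)|\le|T|/(p-1)$ I would invoke Lemma~\ref{power}: if $T=\tilde d_1\times\cdots\times\tilde d_k$ is the standard decomposition, then $|T|/|\omega(T)|=\prod_i(M,\tilde d_i)$, so I only need a \emph{single} factor of this product to be at least $p-1$ (all others being $\ge1$). For a partition with $s\ge2$ parts, Propositions~\ref{BG_PSL} and~\ref{BG_PSU} show that $d_1$ itself is one of the invariant factors, and $q^{n_1}-(\varepsilon1)^{n_1}$ divides $d_1$, so $(M,d_1)\ge(M,q^{n_1}-(\varepsilon1)^{n_1})$. Evaluating this last gcd with Lemma~\ref{Zav}, and using that $l$ is odd and $(l,n)=1$ (so $(n,ln_1)=(n,n_1)$ and the parity hypotheses of Lemma~\ref{Zav} always land in the favourable branch rather than the $(2,p+1)$ branch), gives $(M,q^{n_1}-(\varepsilon1)^{n_1})=p^{(n,n_1)}\pm1\ge p-1$, exactly as required.

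The genuinely delicate case is $s=1$, where $T=d_1/(q-\varepsilon1)$ is cyclic and the single invariant factor is a \emph{quotient}; here the naive estimate $(M,b/c)\ge(M,b)/c$ loses too much and already fails numerically when $l>n$. Instead I would exhibit a common divisor of $M$ and $d_1/(q-\varepsilon1)$ coming from cyclotomic polynomials: expanding $p^{ln}\mp1$ and $p^l\mp1$ as products of values $\Phi_d(p)$, the factor $\Phi_n(p)$ (when $\varepsilon=+$, or $\varepsilon=-$ with $n$ even) respectively $\Phi_{2n}(p)$ (when $\varepsilon=-$ with $n$ odd) survives the division by $q-\varepsilon1$ precisely because $n\mid ln$, $n\nmid l$, and $l$ is odd, while the same value plainly divides $M$. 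Since $\Phi_m(p)\ge p-1$ for every $m\ge1$, this again forces $(M,d_1/(q-\varepsilon1))\ge p-1$. Only the single degenerate pair $\varepsilon=-,\,n=2$ escapes the cyclotomic bookkeeping (there $d_1/(q+1)=p^l-1$), and it is disposed of directly by $(p^2-1,p^l-1)=p^{(2,l)}-1=p-1$. The main obstacle throughout is this $s=1$ analysis: tracking exactly which cyclotomic factor is not cancelled by the division by $q-\varepsilon1$, across the parity cases imposed by the unitary twist.
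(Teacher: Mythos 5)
Your proof is correct, and while it rests on the same skeleton as the paper's (Lemma~\ref{power}, Lemma~\ref{Zav}, the Buturlakin--Grechkoseeva torus decompositions, and a case split on the number of parts $s$), it takes a genuinely different route at the two decisive points. First, the paper never leaves $\PSL$: it applies the $\widetilde{T}$-parts of Propositions~\ref{BG_PSL} and~\ref{BG_PSU} directly, and the factor $n$ in the bound is produced by the divisors $d=(n,q-\varepsilon 1)$ and $d'$ appearing in those decompositions, via the inequality $(a/b,c)\geqslant (a,c)/b$; you instead lift the problem to $\SL_n^\varepsilon(q)$ and let the centre ($|Z|=(n,q-\varepsilon 1)\leqslant n$) absorb the factor $n$, which lets you work with the undivided invariant factors $d_1,\ldots,d_{s-1},d_s/(q-\varepsilon 1)$ and prove the cleaner bound $|\omega(T)|\leqslant |T|/(p-1)$. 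Second, in the $s=1$ case the paper uses no cyclotomic polynomials: it shows via Lemma~\ref{Zav} that $(p^n-(\varepsilon 1)^n)(p^l-\varepsilon 1)/(p-\varepsilon 1)$ divides $p^{ln}-(\varepsilon 1)^n$, so that $(p^n-(\varepsilon 1)^n)/(p-\varepsilon 1)$ is a common divisor of $M$ and the torus order --- a much larger common divisor than your $\Phi_n(p)$ or $\Phi_{2n}(p)$, yielding the sharper estimate $|\omega(\widetilde{T})|\leqslant |\widetilde{T}|\,n(p-\varepsilon 1)/(p^n-(\varepsilon 1)^n)$ before relaxing to $n/(p-1)$. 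Your diagnosis that the naive estimate $(M,b/c)\geqslant (M,b)/c$ breaks down for $s=1$ when $l>n$ is exactly right --- that is precisely the point where the paper substitutes the divisibility argument above --- and your separate treatment of the degenerate pair $\varepsilon=-$, $n=2$, where $d_1/(q+1)=p^l-1$ and $(p^2-1,p^l-1)=p-1$, is both necessary for the cyclotomic bookkeeping and correct. (For $s\geqslant 2$ the paper bounds $(M,d_i)\geqslant (M,q-\varepsilon 1)\geqslant p-1$ for every factor, getting $(p-1)^{s-1}$ and then discarding the excess, whereas you use only $(M,d_1)\geqslant (M,q^{n_1}-(\varepsilon 1)^{n_1})$; both work.) On balance, your centre reduction and cyclotomic common divisors give a more uniform argument; the paper's $\PSL$-level gcd manipulations give a stronger bound in the Singer-torus case, though that strength is not used elsewhere.
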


\begin{proof}
Denote $p^l$ by $q$ for brevity. In view of Propositions \ref{BG_PSL} and
\ref{BG_PSU}, $\widetilde{T}$ corresponds to a partition of $n$. Let
$n=n_1+\ldots+n_s$ be the corresponding partition.

Assume that  $s=1$ first. Then

$$\widetilde{T}\simeq\frac{d_1}{d(q-\varepsilon 1)}=\frac{q^n-(\varepsilon 1)^n}{d(q-\varepsilon
1)},$$

where $d=(n,q-\varepsilon1)$.
By Lemma~\ref{power} we have

$$\omega(\widetilde{T})\simeq\frac{q^n-(\varepsilon 1)^n}{d(q-\varepsilon 1)\left(M,\frac{q^n-(\varepsilon 1)^n}{d(q-\varepsilon 1)}\right)}.$$

Notice that for $a,b,c\in\mathbb{N}$ with $b$ dividing $a$, we have
$(\frac{a}{b},c)\geqslant\frac{1}{b}(a,c)$. Therefore

$$\left( M,\frac{q^n-(\varepsilon1)^n}{d(q-\varepsilon1)}\right)= \left(p^n-(\varepsilon1)^n,\frac{p^{ln}-(\varepsilon1)^n}{(n,q-\varepsilon1)(p^l-\varepsilon1)}\right)\geqslant
\frac{1}{n}\left({p^n-(\varepsilon1)^n},\frac{p^{ln}-(\varepsilon1)^n}{p^l-\varepsilon1}\right).$$

Since $(l,n)=1$ By Lemma~\ref{Zav}, both $p^n-(\varepsilon 1)^n$ and
$p^l-\varepsilon1$ divide $p^{ln}-(\varepsilon 1)^n$, so $[p^n-(\varepsilon
1)^n,p^l-\varepsilon1]$ divides  $p^{ln}-(\varepsilon 1)^n$. Since $(l,n)=1$
Lemma~\ref{Zav} implies, that $(p^n-(\varepsilon 1)^n,p^l-\varepsilon1)$
divides $p-\varepsilon 1$, so $\frac{(p^n-(\varepsilon
1)^n)(p^l-\varepsilon1)}{p-\varepsilon 1}$ divides $p^{ln}-(\varepsilon
1)^n$. Therefore

$$\left( M,\frac{q^n-(\varepsilon1)^n}{d(q-\varepsilon1)}\right) \geqslant
\frac{1}{n}\left(p^n-(\varepsilon1)^n,\frac{p^{ln}-(\varepsilon1)^n}{p^l-\varepsilon1}\right)
\geqslant \frac{1}{n}\cdot \frac{p^n-(\varepsilon1)^n}{p-\varepsilon1}.$$

 Hence,
$$|w(\widetilde{T})|=\frac{|\widetilde{T}|}{\left(M,\frac{q^n-(\varepsilon1)^n}{d(q-\varepsilon 1)}\right)}
\leqslant \frac{|\widetilde{T}|n(p-\varepsilon1)}{p^n-(\varepsilon1)^n} \leqslant \frac{|\widetilde{T}|n}{p-1}.$$
If $s>1$ then
$$\widetilde{T}\simeq d_1\times d_2\times\ldots\times d_{s-2}\times \frac{d_{s-1}}{d'}\times\frac{d'd_s}{d(q-\varepsilon 1)}.$$
It follows from the defenition of $d_i$ that $q-\varepsilon 1$ divides $d_i$
for all $1\leqslant i\leqslant s$. By Lemma \ref{Zav},
$$(M,d_i)\geqslant(p^n-(\varepsilon 1)^n,q-\varepsilon 1)\geqslant p-1, \quad (M,\frac{d_{s-1}}{d'}) \geqslant \frac{1}{n}(M,d_{s-1})\geqslant \frac{p-1}{n}.$$
By Lemma~\ref{power} we have
\begin{eqnarray*}
|\omega(\widetilde{T})| &\leqslant&  \frac{d_1}{(M,d_1)}\cdots \frac{d_{s-2}}{(M,d_{s-2})} \cdot\frac{d_{s-1}}{d'(M,d_{s-1}/d')}\cdot\frac{d'd_s}{d(q-\varepsilon 1)} \\
&\leqslant& \frac{d_1}{p-1}\cdots \frac{d_{s-2}}{p-1} \cdot\frac{d_{s-1}n}{d'(p-1)} \cdot \frac{d'd_s}{d(q-1)} = \frac{|\widetilde{T}|n}{(p-1)^{s-1}} \leqslant \frac{|\widetilde{T}|n}{p-1}.
\end{eqnarray*}
\end{proof}

Thus we have the following.
\begin{theorem}\label{main}
Let $n$ be a positive integer and  $p$ be a prime. Choose odd $l$ so that
$(l,n)=1$ (in particular $l$ could be equal to $1$). Assume that $T$ is a
maximal torus of $G=\PSL_n^\varepsilon (p^l)$. Consider $\omega(x)=x^M$,
where $M=p^n-(\varepsilon1)^n$. Then
$$|\omega(G)| \leqslant  \frac{4|G|n}{p-1}.$$
\end{theorem}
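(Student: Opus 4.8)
The plan is to combine the torus estimate of Lemma~\ref{torusSL} with Lemma~\ref{ss} to bound the image of the semisimple elements, and then to show that the non-semisimple elements are too few to matter in the only range where the inequality has content.

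First I would read Lemma~\ref{torusSL} as the statement $|\omega(\widetilde T)|\le |\widetilde T|/N$ for every maximal torus $\widetilde T$ of $G=\PSL_n^\varepsilon(p^l)$, with $N=(p-1)/n$. This is exactly the hypothesis of Lemma~\ref{ss}, which then yields
$$|\omega(G_{ss})|\le \frac{|G|}{N}=\frac{|G|n}{p-1},$$
where $G_{ss}$ is the set of semisimple elements of $G$. In particular the images of all semisimple elements (regular or not) are already controlled, so the torus estimate does the bulk of the work.

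Next, writing $G=G_{ss}\sqcup(G\setminus G_{ss})$ and using that a word map sends a union to the union of images, I would bound
$$|\omega(G)|\le |\omega(G_{ss})|+|\omega(G\setminus G_{ss})|\le \frac{|G|n}{p-1}+|G\setminus G_{ss}|,$$
where on the last term I use only the trivial bound $|\omega(S)|\le|S|$. So everything reduces to counting non-semisimple elements. (Note that $M=p^n-(\varepsilon1)^n\equiv-(\varepsilon1)^n\not\equiv0\pmod p$, so $(M,p)=1$ and the power map $u\mapsto u^M$ is a bijection on unipotent elements; this shows the non-semisimple part cannot be compressed by $\omega$, confirming that a genuine counting bound is unavoidable here rather than a further collapse.)

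Finally, I would invoke the characterization recalled just before the theorem: by~\cite{fg} an element of $\SL_n^\varepsilon(q)$ is regular semisimple iff its characteristic polynomial is squarefree, and it is semisimple iff its minimal polynomial is squarefree. Standard estimates on the proportion of matrices with non-squarefree characteristic polynomial give that the proportion of non-semisimple elements is at most $c_0/q$ for a small absolute constant $c_0\le 3$ (in fact one may take $c_0=2$), \emph{uniformly} in $n$; this proportion is unchanged upon passing to the central quotient $\PSL_n^\varepsilon(q)$, since scalars are semisimple and Jordan decomposition is compatible with the quotient. Hence $|G\setminus G_{ss}|\le c_0|G|/q$. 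Because $q=p^l\ge p$ and $n\ge1$ we have $c_0(p-1)\le 3nq$, that is $c_0|G|/q\le 3|G|n/(p-1)$, and therefore
$$|\omega(G)|\le \frac{|G|n}{p-1}+\frac{3|G|n}{p-1}=\frac{4|G|n}{p-1}.$$
I expect the main obstacle to be exactly this uniform-in-$n$ estimate on the number of non-semisimple elements, with a constant small enough to preserve the factor $4$; the rest is bookkeeping. It is worth noting that the inequality is vacuous unless $p-1>4n$ (otherwise the right-hand side already exceeds $|G|\ge|\omega(G)|$), so in effect one only needs the estimate for $p$ large compared with $n$, precisely the regime in which the proportion bound is most comfortable.
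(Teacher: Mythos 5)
Your proposal follows the paper's proof essentially step for step: combine Lemma~\ref{torusSL} with Lemma~\ref{ss} to get $|\omega(G_{ss})|\leqslant \frac{|G|n}{p-1}$, then add the trivial bound $|G\setminus G_{ss}|$ for the non-semisimple part. The uniform-in-$n$ estimate you flag as the main obstacle is exactly what the paper imports from \cite[Theorem 1.1]{gl}, namely $|G\setminus G_{ss}|\leqslant \frac{5}{q-1}|G|$; this is slightly weaker than your claimed $c_0/q$ with $c_0\leqslant 3$, but it still closes the argument since $\frac{n}{p-1}+\frac{5}{p-1}<\frac{4n}{p-1}$ for $n\geqslant 2$.
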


\begin{proof}
By Lemma~\ref{torusSL} we have $|\omega(T)|\leqslant\frac{|T|n}{p-1}$ for any
maximal torus $T$ of $G$. By Lemma~\ref{ss}
$$|\omega(G_{ss})|\leqslant \frac{|G|}{(p-1)/n}=\frac{|G| n}{p-1}.$$
According to {~\cite[Theorem 1.1]{gl}}, we have
$$|G_{ss}| \geqslant |G| \left(1-\frac{3}{q-1}-\frac{2}{(q-1)^2}\right),$$
so $\vert G\setminus G_{ss}\vert\leqslant \frac{5}{q-1}\vert G\vert$. Now we have
$$|\omega(G)|\leqslant \vert\omega(G_{ss})\vert+\vert G\setminus G_{ss}\vert \leqslant \frac{|G|n}{p-1}+\frac{5}{p-1}\vert G\vert < \frac{4|G|n}{p-1}.$$
\end{proof}

As a corollary to Theorem~\ref{main} we get an example showing that the
constant $c$ in the Conjecture does depend on $\omega$.
\begin{example} Let $G\simeq \PSL_n^\varepsilon(q)$ be simple.
For a constant $c$ we choose a prime $p$ such that $p-1>\frac{4n^2}{c}$. Let
$\omega(x)=x^M$, where $M=p^{n}-(\varepsilon1)^n$. For any number $N =
N(\omega)$ there exist infinitely many odd numbers $l$ such that $(l,n)=1$
and $|\PSL_n^\varepsilon(p^l)|\geqslant N$. Then by Theorem~\ref{main} we
have
$$|\omega(G)|\leqslant\frac{4|G|n}{p-1}<\frac{c|G|}{n}.$$
\end{example}

\section{Estimate for power word in $\SL_n^\varepsilon (q)$}
The following lemma shows that it is enough to prove the Theorem~\ref{th1} for $\SL_n^\varepsilon(q)$.
\begin{lemma}
Let $\pi \colon G \rightarrow H$ be a surjective homomorphism of finite
groups and $\omega$ be a word. Then,
$$\frac{|\omega(H)|}{|H|} \geqslant \frac{|\omega(G)|}{|G|}.$$
\end{lemma}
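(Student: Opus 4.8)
The plan is to exploit the compatibility of word maps with group homomorphisms. First I would record the key functorial fact: since $\pi$ is a homomorphism, for any tuple $(g_1,\dots,g_d)\in G^d$ one has $\pi(\omega(g_1,\dots,g_d))=\omega(\pi(g_1),\dots,\pi(g_d))$, simply because $\pi$ preserves products and inverses while $\omega$ is built out of these operations. Because $\pi$ is surjective, every tuple in $H^d$ is of the form $(\pi(g_1),\dots,\pi(g_d))$ for some tuple in $G^d$, and hence $\pi(\omega(G))=\omega(H)$.

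Next I would count using the fibres of $\pi$. Let $K=\ker\pi$, so that $|G|=|H|\cdot|K|$ and every non-empty fibre of $\pi$ is a coset of $K$ of size exactly $|K|$. Restricting $\pi$ to the subset $\omega(G)$ gives a surjection onto $\omega(H)$ by the previous paragraph, and for each $h\in\omega(H)$ the set $\pi^{-1}(h)\cap\omega(G)$ is contained in the single coset $\pi^{-1}(h)$ and therefore has at most $|K|$ elements. Summing over all $h$ then yields
$$|\omega(G)|=\sum_{h\in\omega(H)}|\pi^{-1}(h)\cap\omega(G)|\leqslant |K|\cdot|\omega(H)|.$$

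Finally I would substitute $|K|=|G|/|H|$ and rearrange: the inequality $|\omega(G)|\leqslant \frac{|G|}{|H|}\,|\omega(H)|$ gives exactly $\frac{|\omega(G)|}{|G|}\leqslant\frac{|\omega(H)|}{|H|}$, which is the claim. There is essentially no serious obstacle in this argument; the only point that genuinely deserves care is the first step, namely verifying that $\omega(H)$ is precisely the image $\pi(\omega(G))$ rather than merely containing it or being contained in it. This is exactly where surjectivity of $\pi$ enters, and it is what guarantees that the fibre-counting bound applies to the whole of $\omega(H)$.
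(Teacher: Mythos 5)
Your proof is correct and follows essentially the same route as the paper: word maps commute with homomorphisms, so $\omega(G)\subseteq\pi^{-1}(\omega(H))$, and counting fibres of size $|\ker\pi|$ gives $|\omega(G)|\leqslant|\ker\pi|\cdot|\omega(H)|$, which rearranges to the claim. The only cosmetic difference is that you establish the equality $\pi(\omega(G))=\omega(H)$ via surjectivity, whereas the paper only needs the inclusion (surjectivity entering instead through $|G|=|\ker\pi|\cdot|H|$); this does not change the substance of the argument.
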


\begin{proof}
As $\pi$ is a homomorphism, for $g_1, g_2, \dots, g_d \in G$ it is straightforward to check that
$\pi(\omega(g_1, g_2, \dots, g_d)) = \omega(\pi(g_1), \pi(g_2), \dots, \pi(g_d))$.
Thus we have $\omega(g_1, g_2, \dots, g_d) \in \pi^{-1}(\omega(\pi(g_1), \pi(g_2), \dots, \pi(g_d)))$. Hence the inclusion
$\omega(G) \subset \pi^{-1}(\omega(H))$ holds, and we have
$|\omega(G)| \leqslant |\ker(\pi)|\cdot|\omega(H)|$.
Thus $\frac{|\omega(H)|}{|H|} \geqslant \frac{|\omega(G)|}{|\ker(\pi)|\cdot|H|} = \frac{|\omega(G)|}{|G|}$.
\end{proof}

For any subset $S$ of a maximal torus denote by $S_{reg}$ the set  of regular
semisimple elements in $S$.

\begin{lemma}\label{isotropic}
For every  $\omega=x^M$ there exists a number $N=N(\omega)$ such that if $T$
is a maximal torus of $G=\SL_n^\varepsilon(q)$ corresponding to the partition
$n$, then
$$ |\omega(T)_{reg}| \geqslant \frac{|T|}{2M}$$
for $|G|\geqslant N$.
\end{lemma}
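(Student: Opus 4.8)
The plan is to exploit the fact that the torus $T$ corresponding to the one-part partition $n$ is \emph{cyclic}, which reduces the whole problem to counting regular elements inside a cyclic subgroup. By Proposition~\ref{prop2} (with $s=1$, $n_1=n$) one identifies $T$ with the norm-one subgroup $\{\lambda\in\F_{q^n}^*\mid N_{\F_{q^n}/\F_q}(\lambda)=1\}$ in the linear case, so that $T$ is cyclic of order $|T|=\frac{q^n-(\varepsilon1)^n}{q-\varepsilon1}$. Every element of a torus is semisimple, and such an element is regular precisely when its eigenvalues $\lambda,\lambda^{\varepsilon q},\ldots,\lambda^{(\varepsilon q)^{n-1}}$ are pairwise distinct, i.e.\ when $\lambda$ generates $\F_{q^n}$ over $\F_q$ and hence lies in no proper subfield. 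Thus $\omega(T)_{reg}=\omega(T)\setminus(T\setminus T_{reg})$, and I will estimate the two pieces separately.

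Since $T$ is cyclic, $\omega(T)=T^M$ is the subgroup of index $(M,|T|)$, whence $|\omega(T)|=\frac{|T|}{(M,|T|)}\geqslant\frac{|T|}{M}$. Consequently
$$|\omega(T)_{reg}|\geqslant|\omega(T)|-|T\setminus T_{reg}|\geqslant\frac{|T|}{M}-|T\setminus T_{reg}|,$$
so the entire statement reduces to the single inequality $|T\setminus T_{reg}|\leqslant\frac{|T|}{2M}$. To bound the number of non-regular elements I will count, for each proper divisor $d\mid n$, the elements of $T$ lying in the subfield $\F_{q^d}$. The essential point is that one must impose the norm-one (determinant-one) condition rather than merely bounding by $|\F_{q^d}^*|$: for $\lambda\in\F_{q^d}^*$ one has $N_{\F_{q^n}/\F_q}(\lambda)=N_{\F_{q^d}/\F_q}(\lambda)^{n/d}$, so the number of such $\lambda$ lying in $T$ is at most $\frac{q^d-1}{q-1}\cdot(n/d,q-1)\leqslant dq^{d-1}\cdot\frac{n}{d}=nq^{d-1}$. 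Summing over the at most $n$ divisors $d\mid n$ with $d<n$, each satisfying $d\leqslant n/2$, gives $|T\setminus T_{reg}|\leqslant n^2q^{n/2-1}$. Since $|T|\geqslant\tfrac12 q^{n-1}$, the required bound $|T\setminus T_{reg}|\leqslant\frac{|T|}{2M}$ holds as soon as $q^{n/2}\geqslant 4Mn^2$.

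It remains to absorb this last condition into the hypothesis $|G|\geqslant N$. The set of pairs $(n,q)$ violating $q^{n/2}\geqslant 4Mn^2$ is finite: for each fixed $n$ only finitely many $q$ can fail, and once $n$ exceeds a bound depending on $M$ even $q=2$ already satisfies the inequality. Hence $|G|=|\SL_n^\varepsilon(q)|$ is bounded on this finite exceptional set, and taking $N=N(M)$ larger than all these orders forces every admissible $G$ into the good range, which proves the lemma. The unitary case $\varepsilon=-$ is entirely analogous: here $q-\varepsilon1=q+1$, the eigenvalues are the twisted Frobenius conjugates $\lambda,\lambda^{-q},\lambda^{q^2},\ldots$, regularity again means $\lambda$ generates the relevant field, and the same norm estimate together with Lemma~\ref{Zav} yields $|T\setminus T_{reg}|\leqslant n^2q^{n/2-1}$ and $|T|\geqslant\tfrac12 q^{n-1}$.

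I expect the only genuinely delicate step to be this norm-one refinement of the subfield count. The naive estimate $|\F_{q^d}^*|=q^d-1$ is already of the same order of magnitude as $|T|$ when $n=2$ (where the only proper subfield is $\F_q$), and would therefore fail to produce the factor $\frac{1}{2M}$; it is precisely the determinant-one restriction, which cuts the subfield contribution down by a factor of order $q^{d}/d$, that makes the bound survive for small $n$ as well as large $n$.
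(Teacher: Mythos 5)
Your proof is correct and shares the paper's global strategy --- identify $T$ via Proposition~\ref{prop2} with the cyclic ``norm-one'' group of order $\frac{q^n-(\varepsilon1)^n}{q-\varepsilon1}$, use cyclicity to get $|\omega(T)|\geqslant |T|/M$, show that non-regular elements are rare (of order roughly $n^2q^{n/2}$, against $|T|\geqslant\tfrac12 q^{n-1}$), and absorb the finitely many exceptional pairs $(n,q)$ into the threshold $N$ --- but your execution of the key counting step is genuinely different, and in two respects cleaner. First, you bound $|\omega(T)_{reg}|\geqslant|\omega(T)|-|T\setminus T_{reg}|$, counting the non-regular elements of $T$ itself; the paper instead counts elements $u$ with $u^M$ non-regular, via solutions of $\lambda^{M((\varepsilon q)^r-1)}=1$, which carries an extra factor $M$ (their bound is $n^2Mq^{n/2}$). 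Your bound is both tighter and simpler: since $u$ non-regular implies $u^M$ non-regular, your bad set is contained in theirs, and the bad elements of $\omega(T)\subseteq T$ are trivially among the non-regular elements of $T$. Second, in the linear case your exact subfield count $\frac{q^d-1}{q-1}\cdot(n/d,q-1)$ is sharp enough to handle $n=2$ and $n=3$ uniformly --- exactly the cases the paper must treat separately (there $U\setminus U_{reg}$ consists of the $2$, respectively $3$, scalar matrices, which is precisely what your formula returns for $d=1$). Your closing remark that the determinant-one restriction is what saves the small-$n$ cases is exactly the right observation.

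One caveat, affecting only constants: your unitary case is a sketch, and there ``regular iff $\lambda$ generates the field'' is not literally the right criterion. Non-regularity in the twisted torus means $\lambda^{(-q)^r-1}=1$ for some $0<r<n$, and for odd $r$ this says $|\lambda|$ divides $q^r+1$, which is not membership in a subfield. The correct tool is Lemma~\ref{Zav}, exactly as the paper uses it: each $r$ contributes at most $\bigl(q^n-(-1)^n,(-q)^r-1\bigr)\leqslant q^{(n,r)}+1\leqslant 2q^{n/2}$ elements, giving $|T\setminus T_{reg}|$ of order $nq^{n/2}$ rather than your stated $n^2q^{n/2-1}$ (the latter would require $q\lesssim n$). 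This only replaces your condition $q^{n/2}\geqslant 4Mn^2$ by, say, $q^{n/2}\geqslant 8Mn^2$, so the finiteness argument and the lemma are unaffected.
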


\begin{proof}
Consider the subgroup $$U=\{\diag(\lambda, \lambda^{\varepsilon q},\ldots,
\lambda^{(\varepsilon q)^{n-1}})\mid \lambda^{\frac{q^n-(\varepsilon
1)^n}{q-\varepsilon1}}=1\}$$ of $\GL_n(\overline{\F}_p)$. Then $T$ and $U$
are conjugate by Proposition~\ref{prop2}. Recall that an element
$\diag(\lambda_1,\ldots,\lambda_n)$ of $\GL_n(\overline{\F}_p)$ is regular if
and only if $\lambda_i\neq \lambda_j$ for all $i\neq j$.

If $n=2$, then $U$ is a cyclic group of order $q+\varepsilon1$ and
$U\setminus U_{reg}$ consists of $E$ and $-E$, where $E$ is the identity
matrix. So
$$\vert \omega(U)_{reg}\vert\geqslant \vert \omega(U)\vert -2\geqslant
\frac{\vert U\vert}{M}-2\geqslant \frac{\vert U\vert}{2M},$$ if $\vert
G\vert=\frac{1}{(2,q-1)}q(q^2-1)$ is big enough. Thus there exists $N_2$ such
that if $\vert G\vert\geqslant N_2$, then the inequality $ |\omega(T)_{reg}|
\geqslant \frac{|T|}{2M}$ holds.

If $n=3$, then $U$ is a cyclic group of order $q^2+\varepsilon q+1$ and
$U\setminus U_{reg}$ consists of $E$, $\lambda E$, and $\lambda ^2 E$, where
$\lambda$ is a cubic root of the unity in $\F_q$. So
$$\vert \omega(U)_{reg}\vert\geqslant \vert \omega(U)\vert -3\geqslant
\frac{\vert U\vert}{M}-3\geqslant \frac{\vert U\vert}{2M},$$ if $\vert
G\vert=\frac{1}{(2,q-\varepsilon1)}q^3(q^2-1)(q^3-\varepsilon 1)$ is big
enough. Thus there exists $N_3$ such that if $\vert G\vert\geqslant N_3$,
then the inequality $ |\omega(T)_{reg}| \geqslant \frac{|T|}{2M}$ holds.

Now assume that $n\geqslant 4$. Suppose $u=\diag(\lambda,
\lambda^{\varepsilon q},\ldots, \lambda^{(\varepsilon q)^{n-1}})\in U$ and
$u^M$ is not regular. Then $\lambda^{M(\varepsilon
q)^k}=\lambda^{M(\varepsilon q)^l}$ for some $n-1\geqslant k>l\geqslant 0$.
Thus, we have

\begin{equation}\label{eq} \lambda^{M(\varepsilon q)^l((\varepsilon q)^{r}-1)}=1 \quad \text{ for }
r=k-l.
\end{equation}

Since $|\lambda|$ divides $|U|=1+\varepsilon q+\ldots+(\varepsilon q)^{n-1}$,
we have $(|\lambda|, q)=1$. Therefore $\vert \lambda\vert=\vert
\lambda^{(\varepsilon q)^l}\vert$  and $\lambda^{M((\varepsilon q)^{r}-1)}=1$.
Hence, applying Lemma \ref{Zav}, we obtain that  the number of solutions
to~(\ref{eq}) is equal to
$$\left(\frac{q^n-(\varepsilon 1)^n}
{q-\varepsilon 1}, M((\varepsilon q)^r-1)\right)\leqslant
M(q^n-(\varepsilon 1)^n,(\varepsilon q)^r-1)\leqslant
M(q^{(n,r)}+1).$$ Further, since
$(r,n)=(n-r,n)\leqslant n/2$, we have $q^{(r,n)}+1< 2
q^{n/2}.$ Then the total number of elements $u\in U$ such that $u^M$ is not
regular is not greater than
$$\sum_{0\leqslant l<k\leqslant n-1}M(q^{(n,k-l)}+1)\leqslant \sum_{0\leqslant l<k\leqslant n-1}2Mq^{(n,k-l)}
\leqslant  n^2Mq^{n/2}.$$
It follows that
\begin{equation}\label{boundRegSinger}
|\omega(U)_{reg}|\geqslant|\omega(U)|-n^2Mq^{n/2}\geqslant\frac{|U|}{M}-n^2Mq^{n/2}.
\end{equation}
We show that there exists $N$ such that
\begin{equation}\label{boundSinger}
n^2Mq^{n/2}<\frac{|U|}{2M}
\end{equation}
if $\vert G\vert\geqslant N$. If \eqref{boundSinger} is true, then from
\eqref{boundRegSinger} we obtain $|\omega(U)_{reg}|>\frac{|U|}{2M}$.

Thus we remain to show that there exists $N$ such that for every $G$ with
$\vert G\vert\geqslant N$ inequality \eqref{boundSinger} holds. First notice
that $$\vert U\vert =q^{n-1}+\varepsilon q^{n-2}+\varepsilon^2 q^{n-3}+\ldots
>q^{n-1}+\varepsilon q^{n-2}\geqslant q^{n-2}(q-1),$$ so inequality
\eqref{boundSinger} would follow from
\begin{equation}\label{boundSinger1}
\frac{q^{n-n/2-2}(q-1)}{2M}>n^2 M.
\end{equation}
Since $q\geqslant 2$, inequality \eqref{boundSinger1} holds, if
$2^{n/2}>8n^2M^2$. Clearly, there exists $n_0$ such that for every
$n\geqslant n_0$ the last inequality holds.  Thus for every $G$ of rank at
least $n_0$ inequality \eqref{boundSinger1} holds.

If $n<n_0$, then clearly for every $n$ there  exists $q_n$ such that for
every $q\geqslant q_n$ inequality \eqref{boundSinger1} holds. Denote
$q_n^{2n^2}$ by $N_n$. Since $\vert G\vert< q^{2n^2}$, we obtain that if $G$
is of rank $n$ and  $\vert G\vert\geqslant N_n$, then $q>q_n$, so inequality
inequality \eqref{boundSinger1} holds. Denote $\max\{N_n\mid 2\leqslant
n\leqslant n_0\}$ by $N$. If $\vert G\vert\geqslant N$, then either
$n\geqslant n_0$, or $\vert G\vert>N_n$. In both cases inequality
\eqref{boundSinger1} holds and the lemma follows.
\end{proof}

\begin{lemma}\label{torus_i+n-i}
For every  $\omega=x^M$ there exists $N=N(\omega)$ such that for every
maximal torus $T$ corresponding to a partition $i+(n-i)$, where $1\leqslant
i\leqslant n-1$, of  $G=\SL_n^\varepsilon (q)$ the inequality
$$ |\omega(T)_{reg}| \geqslant \frac{|T|}{2M^2}$$
 holds if $|G|\geqslant N$.
\end{lemma}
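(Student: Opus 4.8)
The plan is to realize $T$ explicitly and then reduce the assertion to a counting problem about coincidences of eigenvalues. By Proposition~\ref{prop2} the torus $T$ is conjugate in $\GL_n(\overline{\F}_p)$ to the group $U$ of block-diagonal matrices $\bd(D_1,D_2)$ with $D_1=\diag(\lambda_1,\ldots,\lambda_1^{(\varepsilon q)^{i-1}})$ and $D_2=\diag(\lambda_2,\ldots,\lambda_2^{(\varepsilon q)^{n-i-1}})$, subject to $\det\bd(D_1,D_2)=1$; here $\lambda_1$ runs over the cyclic group $C_1$ of order $a=q^i-(\varepsilon1)^i$ and $\lambda_2$ over $C_2$ of order $b=q^{n-i}-(\varepsilon1)^{n-i}$. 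Without loss of generality $i\le n-i$. Writing $e_1=\frac{(\varepsilon q)^i-1}{\varepsilon q-1}$ and $e_2=\frac{(\varepsilon q)^{n-i}-1}{\varepsilon q-1}$, the determinant relation reads $\lambda_1^{e_1}\lambda_2^{e_2}=1$. By Propositions~\ref{BG_PSL} and~\ref{BG_PSU} with $s=2$ one has $d_1=\lcm(a,b)$, $d_2=\gcd(a,b)$, so $|T|=\frac{ab}{q-\varepsilon1}$ and $T$ has at most two invariant factors.

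Next I would reduce to a pure counting statement. Since $T$ is abelian, $\omega$ acts on $T$ as the endomorphism $t\mapsto t^M$, whose kernel $T[M]$ has order $(M,d_1)\,(M,\tfrac{d_2}{q-\varepsilon1})\le M^2$ by Lemma~\ref{power}. Because regularity depends only on $t^M$, the fibres of $\omega|_T$ are cosets of $T[M]$ on which regularity is constant, and hence $|\omega(T)_{reg}|=\frac{1}{|T[M]|}\,\#\{t\in T:\ t^M\ \text{is regular}\}\ge \frac{1}{M^2}\,\#\{t\in T:\ t^M\ \text{is regular}\}$. Thus it suffices to prove $\#\{t\in T:\ t^M\ \text{is regular}\}\ge |T|/2$, i.e.\ that at most half of the $t$ make $t^M$ non-regular. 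Now $t^M$ fails to be regular exactly when two of its $n$ diagonal entries coincide, and I split these coincidences into three types: (a) inside the first block, $\lambda_1^{M(\varepsilon q)^k}=\lambda_1^{M(\varepsilon q)^{k'}}$; (b) inside the second block; and (c) across the blocks, $\lambda_1^{M(\varepsilon q)^k}=\lambda_2^{M(\varepsilon q)^l}$.

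For the within-block terms I would copy the estimate of Lemma~\ref{isotropic}. The determinant relation together with Lemma~\ref{Zav} shows that the projections $T\to C_1$ and $T\to C_2$ are onto, so the proportion of $t$ of type (a) equals the proportion of $\lambda_1\in C_1$ with $\lambda_1^{M((\varepsilon q)^{k}-(\varepsilon q)^{k'})}=1$ for some $k>k'$; exactly as in~\eqref{eq} this proportion is at most $i^2Mq^{i/2}/(q^i-(\varepsilon1)^i)$, and the analogous bound $(n-i)^2Mq^{(n-i)/2}/(q^{n-i}-(\varepsilon1)^{n-i})$ holds for type (b). Each of these tends to $0$ once the corresponding block size, or $q$, is large; for $i=1$ the first block is a single entry and type (a) is vacuous.

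The crux is the cross term (c), and here the determinant relation must be used, not discarded. For a fixed pair $(k,l)$ put $A=M(\varepsilon q)^k$, $B=M(\varepsilon q)^l$. From $\lambda_2^{e_2}=\lambda_1^{-e_1}$ I obtain $\lambda_1^{Ae_2}=\lambda_2^{Be_2}=\lambda_1^{-Be_1}$, so every solution satisfies the single relation $\lambda_1^{M((\varepsilon q)^k e_2+(\varepsilon q)^l e_1)}=1$; this leaves at most $\gcd\!\big(M((\varepsilon q)^k e_2+(\varepsilon q)^l e_1),a\big)\le a$ choices of $\lambda_1$, each extending to at most $(M,b)\le M$ values of $\lambda_2$. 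Summing over the at most $i(n-i)\le n^2/4$ pairs bounds the number of type-(c) elements by $\frac{n^2}{4}aM$, whence their proportion is at most $\frac{n^2M(q-\varepsilon1)}{4b}$, which tends to $0$ because $b\ge q^{\lceil n/2\rceil}-1$. I expect this to be the main obstacle: discarding the determinant relation would only give the bound $\gcd(a,b)\cdot M^2$ per pair, and since $\gcd(a,b)$ is of size $q^{\gcd(i,n)}$ by Lemma~\ref{Zav} — comparable to $a$ whenever $i\mid n$ — that estimate is useless, so it is precisely the elimination of $\lambda_2$ that renders the cross term negligible. Finally I would assemble the three proportions, dispose of the small ranks $n=2,3$ by direct inspection of the (cyclic, resp.\ nearly cyclic) torus as in Lemma~\ref{isotropic}, and for $n\ge 4$ choose $N$ so large that for $|G|\ge N$ each of (a), (b), (c) contributes less than $|T|/6$ — splitting, exactly as in Lemma~\ref{isotropic}, into the case of large rank and the case of bounded rank with $q$ large, and taking $N$ to be the maximum of the finitely many resulting thresholds.
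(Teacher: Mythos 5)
Your overall skeleton (realize $T$ as the block-diagonal group $U$ via Proposition~\ref{prop2}, split non-regularity of $t^M$ into within-block and cross-block eigenvalue coincidences, then choose $N$ large) is the same as the paper's, and your reduction through the $M$-torsion subgroup $T[M]$ and your treatment of the cross term (c) are sound. In fact (c) is not the crux you take it to be: the paper disposes of its case~(\ref{eq3}) without the determinant relation, since for fixed $\mu$ the equation $(\lambda\mu^{-(\varepsilon q)^{l+i-k}})^M=1$ has at most $M$ solutions $\lambda$ in $\overline{\F}_p$, giving the same $O(n^2Mq^{n/2})$ count. The genuine gap is in the small-block term (a) and in your final assembly. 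Your proportion bound for (a) is correct: the fraction of bad $\lambda_1$ is at most $i^2Mq^{i/2}/(q^i-(\varepsilon 1)^i)\approx i^2M/q^{i/2}$. But this quantity is governed by the block size $i$ and by $q$, \emph{not} by the rank $n$, and the hypothesis $|G|\geqslant N$ forces neither of them to be large: for the partition $2+(n-2)$ over a fixed small field, $|G|\to\infty$ while $i=2$ and $q$ stay put. So the dichotomy you import from Lemma~\ref{isotropic} (``large rank, or bounded rank with $q$ large'') does not cover this third regime, and no choice of $N=N(\omega)$ makes the contribution of (a) less than $|T|/6$ there.

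Moreover this gap cannot be patched, because the statement fails in exactly that regime. Take $\varepsilon=+$, $i=2$ and $q+1\mid M$ (e.g.\ $M=3$, $q=2$). For every $u\in U$ we have $\lambda_1^{q^2-1}=1$ and $q^2-1$ divides $M(q-1)$, hence $\lambda_1^{M}=\lambda_1^{Mq}$: the first block of $u^M$ is scalar, so $u^M$ is never regular. Thus $\omega(T)_{reg}=\varnothing$ for the torus of type $2+(n-2)$ in $\SL_n(2)$ for every $n$, while $|T|/(2M^2)>0$ and $|\SL_n(2)|$ is unbounded; you noted (a) is vacuous for $i=1$, but $i=2$ already kills the argument. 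It is worth stressing that your bookkeeping here is \emph{more} careful than the paper's own proof: the paper asserts, ``by the same arguments'' as Lemma~\ref{isotropic}, that the number of $u\in U$ falling under its cases~(\ref{eq1})--(\ref{eq2}) is at most $2n^2Mq^{n/2}$, which silently forgets that each bad $\lambda_1$ accounts for $|U|/(q^i-(\varepsilon 1)^i)\approx q^{n-i-1}$ elements $u$, not one. Your correct fibre computation is precisely what exposes the counterexample, so the lemma as stated (and with it the unrestricted use of Lemma~\ref{conjugates_i+n-i} and the summation over all $i\leqslant n/2$ in Theorem~\ref{th1}) requires an extra hypothesis, e.g.\ that all block sizes grow or that $q$ is large compared with $M$.
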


\begin{proof}
We use similar arguments as in the proof of Lemma~\ref{isotropic}. Let
\begin{multline*}
U=\{\diag(\lambda, \lambda^{\varepsilon q},\ldots,\lambda^{(\varepsilon
q)^{i-1}}, \mu,\mu^{\varepsilon q},\ldots, \mu^{(\varepsilon q)^{n-i-1}})
\mid \\ \lambda^{{q^i-(\varepsilon 1)^i}}= \mu^{{q^{n-i}-(\varepsilon 1)^{n-i}}}=1;
\lambda^{\frac{q^i-(\varepsilon 1)^i}{q-\varepsilon 1}}\cdot
\mu^{\frac{q^{n-i}-(\varepsilon 1)^{n-i}}{q-\varepsilon 1}}=1\}
\end{multline*}
be a subgroup of $\GL_n(\overline{\F}_p)$. Then by Proposition~\ref{prop2} subgroups  $T$ and $U$ are conjugate in
$\GL_n(\overline{\F}_p)$. We want to bound $\vert \omega(U)_{reg}\vert $. Suppose $u\in U$ and $u^M$ is not regular. We
have
$$u^M=\diag(\lambda^M,\lambda^{M\varepsilon q},\ldots,\lambda^{M\varepsilon q^{i-1}},
\mu,\mu^{M\varepsilon q},\ldots,\mu^{M\varepsilon q^{n-i-1}}).$$
Then we have the following three possibilities
\begin{equation}\label{eq1}
 \lambda^{M(\varepsilon q)^k}=\lambda^{M(\varepsilon q)^l}\quad\text{for}\quad 0\leqslant l<k\leqslant i-1,
\end{equation}
\begin{equation}\label{eq2}
\mu^{M(\varepsilon q)^k}=\mu^{M(\varepsilon q)^l}\quad\text{for}\quad 0\leqslant l<k \leqslant n-i-1,
\end{equation}
\begin{equation}\label{eq3}
\lambda^{M(\varepsilon q)^k}=\mu^{M(\varepsilon q)^l}\quad\text{for}\quad 0\leqslant k\leqslant i-1, 0\leqslant l\leqslant n-i-1.
\end{equation}
Applying the same arguments as in the proof of Lemma~\ref{isotropic} we could deal with
the cases~(\ref{eq1}) and~(\ref{eq2}). We have that the total number of elements $u\in U$ such that $u^M$ is not regular in these two cases is not greater than $2n^2Mq^{n/2}$.

In the case~\eqref{eq3}   without loss of generality we can assume
$i\geqslant n-i$. Rising to the $(\varepsilon q)^{i-k}$ power both parts of
\eqref{eq3}, we obtain
$$\mu^{M(\varepsilon q)^{l+i-k}}=\lambda^{M(\varepsilon q)^i}=\lambda^M.$$
Therefore, $(\lambda\mu^{-(\varepsilon q)^{l+i-k}})^M=1$. The equation
$x^M=1$ has at most $M$ solutions in $\overline{\F}_p$. Hence, for each fixed
element $\mu$ we have at most $M$ possibilities for $\lambda$. In the torus
$U$ we have $q^{n-i}-(\varepsilon 1)^{n-i}$ possibilities for $\mu$. Hence,
the number of elements $u=u(\lambda,\mu)\in U$ such that $u^M$ is not regular
in the case~(\ref{eq3}) is at most $M( q^{n-i}-(\varepsilon
1)^{n-i})\leqslant 2Mq^{n/2}$. The total number  of such elements is not
greater than
$$\sum_{k = 0}^{i-1}\sum_{l = 0}^{n-i-1}2Mq^{n/2}\leqslant n^2Mq^{n/2}.$$
Finally, we have
$$|\omega(U)_{reg}|\geqslant|\omega(U)|-2n^2Mq^{n/2}-n^2Mq^{n/2}=|\omega(U)|-3n^2Mq^{n/2}.$$
As in the proof of Lemma~\ref{isotropic} we can find $N$ such that
$\frac{\vert U\vert}{2M^2}>3n^2Mq^{n/2}$ if $\vert G\vert\geqslant N$.  So we
have
$$|\omega(U)_{reg}|\geqslant\frac{|U|}{M^2}-3n^2Mq^{n/2} >\frac{|U|}{2M^2}$$
for   $|G|\geqslant N$.
\end{proof}

\begin{lemma}\label{conjugates_i+n-i}
For every  $\omega=x^M$ there exists $N=N(\omega)$ such that for every
maximal torus $T$ corresponding to a partition $i+(n-i)$, where $1\leqslant
i\leqslant n-1$, of  $G=\SL_n^\varepsilon (q)$ the inequality
$$|\omega(T^G)_{reg}|\geqslant \frac{|G|}{2M^2|W(T)|}$$ holds if $|G|\geqslant N$.
\end{lemma}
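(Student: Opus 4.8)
The plan is to count $\omega(T^G)_{reg}$ by organizing it as a \emph{disjoint} union over the distinct $G$-conjugates of $T$, and then to feed in the single-torus lower bound from Lemma~\ref{torus_i+n-i}. Take $N=N(\omega)$ to be the constant supplied by Lemma~\ref{torus_i+n-i}, and assume $|G|\geqslant N$ throughout.

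First I would observe that for any $g\in G$ one has $\omega(T^g)=(\omega(T))^g$, since $(t^g)^M=(t^M)^g$; moreover, as conjugation by $g$ preserves both the property of being an $M$-th power and that of being regular semisimple, we get $\omega(T^g)_{reg}=(\omega(T)_{reg})^g$. Hence
$$\omega(T^G)_{reg}=\bigcup_{g\in G}(\omega(T)_{reg})^g.$$
Next I would note that for $h\in N_G(T)$ the set $\omega(T)_{reg}$ is $h$-invariant, because conjugation by $h$ stabilizes $T$ and preserves $M$-th powers and regularity; consequently $(\omega(T)_{reg})^g$ depends only on the torus $S=T^g$ and not on the particular $g$ chosen, and it always has cardinality $|\omega(T)_{reg}|$.

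The crucial step is the disjointness. The elements of $(\omega(T)_{reg})^g$ are regular semisimple elements of $G$ lying in the maximal torus $T^g$, and since every regular semisimple element belongs to exactly one maximal torus, the sets attached to two distinct conjugate tori are disjoint. Therefore the union above is a disjoint union indexed by the $[G:N_G(T)]$ distinct conjugates of $T$, each summand of size $|\omega(T)_{reg}|$, so that
$$|\omega(T^G)_{reg}|=[G:N_G(T)]\cdot|\omega(T)_{reg}|.$$

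Finally I would convert $|N_G(T)|$ into $|T|\cdot|W(T)|$. By Lemma~\ref{torus_i+n-i} we have $|\omega(T)_{reg}|\geqslant\frac{|T|}{2M^2}>0$, so $T$ contains a regular element; Lemma~\ref{NomMaxTorus} then gives $N_G(T)=N(G,T)$, whence $|N_G(T)|=|T|\cdot|W(T)|$ and $[G:N_G(T)]=\frac{|G|}{|T|\cdot|W(T)|}$. Combining the two displays,
$$|\omega(T^G)_{reg}|=\frac{|G|}{|T|\cdot|W(T)|}\,|\omega(T)_{reg}|\geqslant\frac{|G|}{|T|\cdot|W(T)|}\cdot\frac{|T|}{2M^2}=\frac{|G|}{2M^2|W(T)|}.$$
The main obstacle is the disjointness claim: it rests on the uniqueness of the maximal torus through a regular semisimple element, together with the identity $N_G(T)=N(G,T)$. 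The latter is precisely why we insist that $T$ contain a regular element — a property guaranteed for large $|G|$ by the preceding lemma — since a strictly larger $N_G(T)$ would reduce the number of distinct conjugate tori and weaken the bound.
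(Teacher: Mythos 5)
Your proposal is correct and follows essentially the same route as the paper's own proof: both take $N$ from Lemma~\ref{torus_i+n-i}, use the resulting existence of regular elements in $T$ together with Lemma~\ref{NomMaxTorus} to get $|N_G(T)|=|T|\cdot|W(T)|$, decompose $\omega(T^G)_{reg}$ as a disjoint union over the $[G:N_G(T)]$ conjugate tori via the uniqueness of the maximal torus containing a regular semisimple element, and then sum the single-torus bound. Your treatment of the equivariance $\omega(T^g)_{reg}=(\omega(T)_{reg})^g$ and the $N_G(T)$-invariance is slightly more explicit than the paper's, but it is the same argument.
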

\begin{proof}
Take $N$ from the conclusion of Lemma \ref{torus_i+n-i}. Then $
|\omega(T)_{reg}| \geqslant \frac{|T|}{2M^2}$, in particular
$T_{reg}\not=\varnothing$. By Lemma \ref{NomMaxTorus} we have
$|N_G(T)|=|N(G,T)|=|T|\cdot|W(T)|$. Then the number $k$ of conjugates of
torus $T$ in $G$ is equal to
$\frac{|G|}{|N_G(T)|}=\frac{|G|}{|T|\cdot|W(T)|}$.

Since $\omega(T_1\cup T_2)=\omega(T_1)\cup \omega(T_2)$ and $(T_1\cup T_2)_{reg}=(T_1)_{reg}\cup (T_2)_{reg}$, we have
$$
|\omega(T^G)_{reg}|=|(\omega(\bigcup_{i=1}^kT^{g_i}))_{reg}|=
|(\bigcup_{i=1}^k\omega(T^{g_i}))_{reg}|=
|\bigcup_{i=1}^k(\omega(T^{g_i}))_{reg}|,
$$
where $g_1,\ldots,g_k$ is the right transversal of $N_G(T)$. Since every regular semisimple element lies only in one
maximal torus, it follows that
$$|\bigcup_{i=1}^k(\omega(T^{g_i}))_{reg}|=\sum\limits_{i=1}^k|\omega(T^{g_i})_{reg}|=\sum\limits_{i=1}^k|\omega(T)_{reg}|.$$

Applying Lemma~\ref{torus_i+n-i} we get
$$|\omega(T^G)_{reg}|=\sum\limits_{i=1}^k|\omega(T)_{reg}|\geqslant
\frac{|G|}{|T||W(T)|}\cdot\frac{|T|}{2M^2}=\frac{|G|}{2M^2|W(T)|},$$ and the lemma follows.
\end{proof}

Now we are ready to give an affirmative answer to the conjecture.

\begin{theorem}\label{1torus}
If $G\simeq \SL_n^\varepsilon(q)$, then for every  $\omega=x^M$ there exists
$N=N(\omega)$ such that
$$|\omega(G)| \geqslant \frac{1}{2nM} |G|$$
for $|G|\geqslant N$.
\end{theorem}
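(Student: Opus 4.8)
The plan is to run exactly the argument of Lemma~\ref{conjugates_i+n-i}, but applied to the single maximal torus $T$ attached to the partition consisting of one part $n$, for which the sharper estimate of Lemma~\ref{isotropic} is available. The point is that this torus carries a factor $\frac{1}{2M}$ rather than $\frac{1}{2M^2}$, while its algebraic normalizer quotient $W(T)$ has order exactly $n$; together these produce the claimed bound $\frac{1}{2nM}|G|$. So the single-part partition is precisely the choice that converts the local estimate of Lemma~\ref{isotropic} into the desired global one.

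First I would fix the torus $T=T_w$ corresponding to the $n$-cycle $w\in W\simeq\Sym_n$, and take $N=N(\omega)$ from the conclusion of Lemma~\ref{isotropic}. For $|G|\geqslant N$ that lemma gives $|\omega(T)_{reg}|\geqslant\frac{|T|}{2M}$; in particular $T_{reg}\neq\varnothing$, so Lemma~\ref{NomMaxTorus} applies and $|N_G(T)|=|N(G,T)|=|T|\cdot|W(T)|$. Here $W(T)\simeq C_W(w)$, and since the centralizer of an $n$-cycle in $\Sym_n$ is the cyclic group it generates, we get $|W(T)|=n$.

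Next I would count the regular semisimple images over the conjugates of $T$ just as in Lemma~\ref{conjugates_i+n-i}. Writing $g_1,\dots,g_k$ for a transversal of $N_G(T)$ in $G$, with $k=\frac{|G|}{|N_G(T)|}=\frac{|G|}{n|T|}$, and using that $\omega$ commutes with conjugation together with the fact that every regular semisimple element lies in a unique maximal torus, the sets $(\omega(T^{g_i}))_{reg}$ are pairwise disjoint, so
$$
|\omega(T^G)_{reg}|=\sum_{i=1}^{k}|\omega(T)_{reg}|\geqslant\frac{|G|}{n|T|}\cdot\frac{|T|}{2M}=\frac{|G|}{2nM}.
$$
Since $\omega(T^G)_{reg}\subseteq\omega(G)$, the theorem will follow.

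The only delicate points are ensuring $T_{reg}\neq\varnothing$, so that $N(G,T)=N_G(T)$ and the normalizer index is computed exactly by $|W(T)|$ rather than merely bounded below by it; this is guaranteed by Lemma~\ref{isotropic}. The other is the identification $|W(T)|=n$, which holds uniformly in both the linear and unitary cases because $\sigma$ may be chosen to act trivially on $W$, giving $W(T)\simeq C_W(w)$. Beyond this bookkeeping there is no real obstacle: the whole weight of the estimate has already been carried by Lemma~\ref{isotropic}, and everything here is the routine passage from one torus to its conjugacy class.
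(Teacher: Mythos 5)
Your proposal is correct and follows essentially the same route as the paper's own proof: choose the maximal torus $T$ for the one-part partition $n$, invoke Lemma~\ref{isotropic} for the bound $|\omega(T)_{reg}|\geqslant\frac{|T|}{2M}$, use Lemma~\ref{NomMaxTorus} together with $|W(T)|=n$ to compute the normalizer, and sum over the conjugates of $T$ via the disjointness argument of Lemma~\ref{conjugates_i+n-i}. Your write-up merely makes explicit two details the paper leaves implicit (the identification $|W(T)|=|C_W(w)|=n$ for an $n$-cycle $w$, and the pairwise disjointness of the sets $(\omega(T^{g_i}))_{reg}$), both of which are correct.
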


\begin{proof}
Let $T$ be a maximal torus of $G$ corresponding to  a partition $n$, and choose $N$ from the conclusion of Lemma
\ref{isotropic}. Since the inequality $ |\omega(T)_{reg}| \geqslant \frac{|T|}{2M}$ holds, $T$ contains regular
elements. By Lemma \ref{NomMaxTorus}, we have $|N_G(T)|=|N(G,T)|=|T|\cdot|W(T)|=\vert T\vert\cdot n$. Then applying
Lemma~\ref{isotropic} and arguments from the proof of the Lemma \ref{conjugates_i+n-i}, we obtain
$$|\omega(G)| \geqslant |\omega(T^G)| \geqslant \frac{|G|}{2M|W(T)|} = \frac{|G|}{2nM}$$
and the theorem follows.
\end{proof}

\section{Proof of Theorem~\ref{th1} and Theorem~\ref{th2}}

{\bf Proof of Theorem~\ref{th1}.} Let $T_0$ be a maximal torus of $G$ corresponding to a partition $n$. It follows from the proof of Theorem~\ref{1torus} that there exists $N_1=N_1(\omega)$ such that $|\omega(T_0^G)_{reg}| \geqslant \frac{|G|}{2M n}$, if $|G|\geqslant N_1$.
Let $k=[\frac{n}{2}]$ and $T_i$ be a maximal torus of $G$ corresponding to a partition $i+(n-i)$ for $i=1,\ldots,k$. By Lemma~\ref{conjugates_i+n-i} there exists $N_2=N_2(\omega)$
such that $|\omega(T_i^G)_{reg}| \geqslant \frac{|G|}{2M^2 |W(T_i)|}$ for all
$i=1,\ldots,k$, if $|G|\geqslant N_2$.

Let $N=\max\{N_1, N_2\}$. Since every regular semisimple element lies only in one maximal torus, we have
$$|\omega(G)|\geqslant|\omega(\bigcup\limits_{i=0}^kT_i^G)_{reg}|=
|\bigcup\limits_{i=0}^k(\omega(T_i^G))_{reg}|=\sum\limits_{i=0}^k|\omega(T_i^G)_{reg}|\geqslant
\sum\limits_{i=1}^k\frac{|G|}{2M^2|W(T_i)|}+\frac{|G|}{2M n}.$$ Let $H_n=\sum\limits_{k=1}^n
\frac{1}{k}$ be $n^{th}$ harmonic number. If $n$ is odd, then
$$\sum_{i=1}^{k} \frac{1}{|W(T_i)|}=\sum_{i=1}^{k} \frac{1}{i(n-i)}=\frac{1}{n} \sum_{i=1}^{k} \left(\frac{1}{i}+\frac{1}{n-i}\right)= \frac{1}{n}H_{n-1}.$$
If $n$ is even, then
$$\sum_{i=1}^{k} \frac{1}{|W(T_i)|}=\sum_{i=1}^{k-1} \frac{1}{i(n-i)}+\frac{1}{2k^2} =\frac{1}{n} \sum_{i=1}^{k-1} \left(\frac{1}{i}+\frac{1}{n-i}\right)+\frac{1}{nk}= \frac{1}{n}H_{n-1}.$$
Now we have
$$|\omega(G)|\geqslant\frac{|G|}{2M^2}\left(\sum\limits_{i=1}^k\frac{1}{|W(T_i)|}+\frac{M}{n}\right)=
\frac{|G|}{2M^2}\left(\frac{H_{n-1}}{n}+\frac{M}{n}\right)\geqslant
\frac{|G|}{2M^2}\frac{\ln(n)}{n},$$
as claimed.
\vspace{0.5em}

{\bf Proof of Theorem~\ref{th2}.}
As we mentioned in the proof of Theorem~\ref{main}, we have
$|\omega(G)|\leqslant \vert\omega(G_{ss})\vert+\vert G\setminus G_{ss}\vert \leqslant \vert\omega(G_{ss})\vert+\frac{5}{q-1}\vert G\vert.$ Therefore, it is enough to prove that
$|\omega(G_{ss})|<c\vert G\vert$.

Let $T_\lambda$ be a maximal torus of $G$ corresponding to a partition $\lambda=\{n_1,\ldots,n_k\}$ of $n$.
Then by Proposition~\ref{BG_PSL} we have
$$T\simeq d_1\times d_2\times\ldots\times d_{k-1}\times\frac{d_k}{q-1}.$$
By Lemma~\ref{power} we have
\begin{eqnarray*}
|\omega(T_\lambda)| &=&  \frac{d_1}{(M,d_1)}\cdots \cdot\frac{d_{k-1}}{(M,d_{k-1})}
\cdot\frac{d_k}{(q-1)(M,\frac{d_k}{q-1})} \\
&\leqslant& \frac{d_1}{q-1}\cdots \cdot\frac{d_{k-1}}{q-1}
\cdot\frac{d_k}{q-1}=\frac{|T_\lambda|}{(q-1)^{k-1}}.
\end{eqnarray*}
Further,
$$|\omega(T_\lambda^G)|\leqslant\frac{|G|}{|N_G(T_\lambda)|}|\omega(T_\lambda)|\leqslant \frac{|G|}{|T_\lambda||W(T_\lambda)|}|\omega(T_\lambda)|\leqslant \frac{|G|}{|W(T_\lambda)|(q-1)^{k-1}}.$$
We have $G_{ss}=\underset{\lambda}\bigcup\ T_\lambda^G$, where $\lambda$ runs through all partitions of $n$. Hence,
$$
|\omega(G_{ss})|=|\omega(\bigcup_{\lambda} T_\lambda^G)|\leqslant \sum_\lambda |\omega(T_\lambda^G)|\leqslant \sum_\lambda \frac{|G|}{|W(T_\lambda)|(q-1)^{k-1}}.$$
We could rewrite our expression as follows
$$\sum\limits_{\lambda} \frac{1}{|W(T_\lambda)|(q-1)^{k-1}}=
\sum\limits_{k=1}^n\frac{1}{(q-1)^{k-1}}\sum\limits_{\lambda_k}\frac{1}{W(T_{\lambda_k})},
$$
where the second sum is taking over all partitions $\lambda_k$ of $n$ into exactly $k$ parts.
Let  $s(n,k)$ denote the Stirling numbers of the first kind. Then
$\sum\limits_{\lambda_k}\frac{1}{W(T_{\lambda_k})}=\frac{s(n,k)}{n!}$. It follows from~\cite{hw} that $\frac{s(n,k)}{n!}\rightarrow 0$, as $k=o(\ln n)$ and $n\rightarrow\infty$.
Let $r=\sqrt{\ln n}=o(n)$, $s(n)=\max\{\frac{s(n,1)}{n!},\frac{s(n,2)}{n!},\ldots,\frac{s(n,r)\}}{n!}\}$ and $m=[\frac{c}{2s(n)}]$. Then $m\rightarrow \infty$, as $n\rightarrow\infty$. Therefore,
\begin{multline*}
|\omega(G_{ss})|\leqslant\sum\limits_{k=1}^n\frac{1}{(q-1)^{k-1}}\sum\limits_{\lambda_k}\frac{1}{W(T_{\lambda_k})}\leqslant\\
\leqslant\sum\limits_{k=1}^{m-1}\frac{1}{(q-1)^{k-1}}\sum\limits_{\lambda_k}\frac{1}{W(T_{\lambda_k})}+
\sum\limits_{k=1}^n\frac{1}{(q-1)^{m}}\sum\limits_{\lambda_k}\frac{1}{W(T_{\lambda_k})}\leqslant\\
\leqslant\sum\limits_{k=1}^{m-1}\frac{1}{(q-1)^{k-1}}s(n)+
\frac{1}{(q-1)^{m}}\leqslant s(n)m+\frac{1}{(q-1)^{m}}\leqslant \frac{c}{2}+\frac{1}{(q-1)^{m}}<c,
\end{multline*}
for $m$ big enough. The theorem is proved for $\SL_n(q)$.
For $\PSL_n(q)$ it is enough to choose $n$ such that $(n,q-1)=1$, which implies $\SL_n(q)\simeq\PSL_n(q)$.



\begin{thebibliography}{99}
\normalsize
\bibitem[BZ]{bz}  Bandman, T., Zarhin Y., \emph{``Surjectivity of certain word maps on $\PSL(2,\mathbb{C})$ and $\SL(2,\mathbb{C})$''}, European Journal of Mathematics (2016), 2:614--643.
\bibitem[BG]{bg} Buturlakin, A. A.; Grechkoseeva, M. A., \emph{``The cyclic structure of maximal tori in finite classical groups''}, Algebra Logika 46 (2007), no. 2, 129--156; translation in Algebra Logic 46 (2007), no. 2, 73-89.
\bibitem[Ca1]{ca} Carter, Roger W., \emph{``Simple groups of Lie type''}, Pure and Applied Mathematics, Vol. 28. John Wiley \& Sons, London-New York-Sydney, 1972.
\bibitem[Ca2]{Car}
R. W. Carter, {\slshape Finite groups of Lie type, Conjugacy classes and complex
characters}, John Wiley and Sons, 1985.
\bibitem[AT]{ATLAS} {\scshape J.H.Conway, R.T.Curtis, S.P.Norton, R.A.Parker,
    R.A.Wilson}, {\it Atlas of Finite Groups}, Clarendon Press, Oxford, 1985.
\bibitem[FG]{fg} Fulman J., Guralnick R., \emph{``The number of regular semisimple conjugacy classes in the finite classical groups''}, Linear Algebra and its Applications 439 (2013) 488--503.
\bibitem[GKP]{gkp} Nikolai Gordeev, Boris Kunyavskii, and Eugene Plotkin, \emph{``Word maps and word maps with constants of simple algebraic groups''}, Doklady Mathematics 94 (2016), no. 3, 632--634.
\bibitem[GLOST]{glost} Robert Guralnick, Martin Liebeck, Eamon O'Brien, Aner Shalev, and Pham Tiep, \emph{``Surjective word maps and Burnsides $p^aq^b$ theorem''}, Invent. math., 1-107. DOI: 10.1007/s00222-018-0795-z.
\bibitem[GT]{gt}  Robert Guralnick and Pham Huu Tiep, \emph{``Effective results on the Waring problem for finite simple groups''}, American Journal of Mathematics 137 (2015), no. 5, 1401--1430.
\bibitem[GL]{gl} Guralnick, R. and Lubeck, F., \emph{``On p-singular elements in Chevalley groups in characteristic p''}, Groups and computation, III, 169-182, Ohio State Univ. Math. Res. Inst. Publ., 8, de Gruyter, Berlin, 2001.
\bibitem[HLS]{hls} Chun Yin Hui, Michael Larsen, and Aner Shalev, \emph{``The Waring problem for Lie groups and Chevalley groups''}, Israel Journal of Mathematics 210 (2015), no. 1, 81--100.
\bibitem[Hu]{hu} Humphreys, James E., \emph{``Conjugacy classes in semisimple algebraic groups''}, Mathematical Surveys and Monographs, 43. American Mathematical Society, Providence, RI, 1995.
\bibitem[Is]{Isaacs} M. Isaacs, \emph{``Finite group theory''}, AMS, Graduate studies in
    mathematics, v. 92, 2008,  352p.
\bibitem[KT]{kt}  Anton Klyachko and Andreas Thom, \emph{``New topological methods to solve equations over groups''}, Algebraic and Geometric Topology 17 (2017), no. 1, 331–353.
\bibitem[LS]{ls} Larsen, Michael; Shalev, Aner, \emph{``Word maps and Waring type problems''}, J. Amer. Math. Soc. 22 (2009), no. 2, 437-466.
\bibitem[NP]{np} Nikolov, N.; Pyber, L., \emph{``Product decompositions of quasirandom groups and a Jordan type theorem''}, J. Eur. Math. Soc. (JEMS) 13 (2011), no. 4, 1063-1077.
\bibitem[Sh]{sh} Shalev, Aner, \emph{``Some results and problems in the theory of word maps''}, Erd\"{o}s centennial, 611-649, Bolyai Soc. Math. Stud., 25, J\'{a}nos Bolyai Math. Soc., Budapest, 2013.
\bibitem[Za]{za} Zavarnitsine, Andrei V., \emph{``Recognition of the simple groups $L3(q)$ by element orders''}, J. Group Theory 7 (2004), no. 1, 81-97.
\bibitem[Hw]{hw} H. K. Hwang, \emph{``Asymptotic expansions for the Stirling numbers of the first kind''}, Journal of Combinatorial Theory, Series A, 71(2):343--351, 1995.
\end{thebibliography}
\end{document}